\author{Layne Hall}
\address{School of Mathematics and Statistics, The University of Melbourne, Victoria 3010 Australia}
\email{lrhall@student.unimelb.edu.au}
\author{Andy Hammerlindl}
\address{School of Mathematical Sciences, Monash University, Victoria 3800 Australia}
\urladdr{ http://users.monash.edu.au/~ahammerl/} 
\email{andy.hammerlindl@monash.edu}
\title{Dynamically incoherent surface endomorphisms}
\newcommand\funding[1]{%
	\begingroup
	\renewcommand\thefootnote{}\footnote{#1}%
	\addtocounter{footnote}{-1}%
	\endgroup
}
\providecommand{\keyword}[1]
{\textbf{Keywords:} #1}
\theoremstyle{definition}
\theoremstyle{Theorem}
\newtheorem{thm}{Theorem}
\newtheorem{theorem}{Theorem}
\newtheorem{corollary}[theorem]{Corollary}
\newtheorem{prop}[thm]{Proposition}
\newtheorem{lem}[thm]{Lemma}
\theoremstyle{remark}
\DeclareMathOperator{\eps}{\varepsilon}
\newcommand{\tilf}{\tilde{f}}
\DeclareMathOperator{\bbT}{\mathbb{T}}
\DeclareMathOperator{\bbR}{\mathbb{R}}
\DeclareMathOperator{\bbZ}{\mathbb{Z}}
\DeclareMathOperator{\Cone}{\mathcal{C}}
\DeclareMathOperator{\Bone}{\mathcal{B}}
\DeclareMathOperator{\bbS}{\mathbb{S}}
\def\restrict#1{\raise-.5exhbox{\ensuremath|}_{#1}}
\newcommand{\sint}{\operatorname{int}}
\numberwithin{thm}{section}
\numberwithin{equation}{section}
\numberwithin{figure}{section}
\begin{document}
	\maketitle
	\begin{abstract}
	We explicitly construct a dynamically incoherent partially hyperbolic endomorphisms of $\bbT^2$ in the homotopy class of any linear expanding map with integer eigenvalues. These examples exhibit branching of centre curves along countably many circles, and thus exhibit a form of coherence that has not been observed for invertible systems.
	\\
	
	\noindent \keyword{Partial hyperbolicity, Non-invertible dynamics, Dynamical coherence.}
	\end{abstract}
	\funding{This work was partially funded by the Australian Research Council.}
	\section{Introduction}\label{sec:intro}
	
	Understanding the integrability of the centre direction is critical for classifying partially hyperbolic dynamics. Non-invertible surface maps demonstrate a broader array of dynamics than their invertible counterparts, for instance, while partially hyperbolic surface diffeomorphisms are dynamically coherent, examples in \cite{enco} and \cite{heshiwang} show that there exist incoherent non-invertible maps. In this paper, we introduce a periodic centre annulus as a mechanism for incoherence of partially hyperbolic surface endomorphisms, and use this to construct incoherent surface endomorpshims which are homotopic to linear expanding maps. The centre curves of these examples behave unlike those of the currently known maps on $\bbT^2$ and diffeomorphisms in higher dimensions.
	
	We begin the statement of our results by recalling the definition of partial hyperbolicity. For non-invertible maps, this is most naturally given in terms of cone families. A \emph{cone family} $\Cone \subset TM$ consists of a closed convex cone $\Cone(p) \subset T_p M$ at each point $p \in M$. A cone family is \emph{$Df$-invariant} if $D_p f \left(\Cone(p)\right)$ is contained in the interior of $\Cone(f(p))$ for all $p\in M$. A map $f:M\to M$ is a \emph{(weakly) partially hyperbolic endomorphism} if it is a local diffeomorphism and it admits a cone family $\Cone^u$ which is $Df$-invariant and there is $k>0$ such that $1 < \| Df^k v^u \|$ for all unit vectors $v^u \in \Cone^u$. In general, an unstable cone family in the non-invertible setting does not imply the existence of an invariant splitting. However, it does imply the existence of a centre direction, that is, a continuous $Df$-invariant line field $E^c\subset TM$ \cite[Section 2]{cropot2015lecture}. For $M$ an orientable closed surface, the existence of $E^c$ implies that $M=\bbT^2$.
	
	The homotopy class of a partially hyperbolic surface endomorphism $f$ plays a fundamental role in the existing classification results. The endomorphism $f$ induces a homomorphism of the fundamental group $\pi_1(\bbT^2)\simeq \bbZ^2$ which can be given by an invertible integer-entried $2\times2$ matrix $A$. The matrix $A$ induces a linear automorphism on $\bbT^2$ which we call the \emph{linearisation} of $f$. Pertinently, $f$ is homotopic to its linearisation. It is useful to categorise the linearisation into three types based on the eigenvalues $\lambda_1$ and $\lambda_2$ of the matrix inducing $A$ which we refer to as follows:
	\begin{itemize}
		\item if $|\lambda_1| < 1 < |\lambda_2|$, we say $A$ is \emph{hyperbolic} if, 
		\item if $1<|\lambda_1| \leq |\lambda_2|$, we say $A$ is \emph{expanding}, and
		\item if $1=|\lambda_1|\leq|\lambda_2|$, we say $A$ is \emph{non-hyperbolic}.
	\end{itemize}
	We say a partially hyperbolic endomorphism of $\bbT^2$ is \emph{dynamically coherent} if there exists an $f$-invariant foliation tangent to $E^c$. Otherwise, we say it is \emph{dynamically incoherent}. A closely related property which is sufficient for coherence is unique integrability: the centre direction $E^c$ is said to be uniquely integrable if there is a unique $C^1$ curve tangent to $E^c$ through every point.
	
	Every endomorphism which has hyperbolic linearisation is both dynamically coherent and leaf conjugate to its linearisation \cite{enco}. Both \cite{heshiwang} and \cite{enco} show this does not hold in general by constructing incoherent endomorphisms, both of which have non-hyperbolic linearisation. We are naturally left with the question: how does the centre direction behave in the case of an expanding linearisation? Our first result addresses this.
	
	\begin{theorem}\label{thm:example}
		There exists a partially hyperbolic endomorphism $f:\bbT^2\to \bbT^2$ which is homotopic to an expanding map and whose centre direction is not uniquely integrable. Moreover, the centre direction of $f$ is uniquely integrable on an open and dense subset of $\bbT^2$, but is not uniquely integrable at each point in a countably infinite family of circles.
	\end{theorem}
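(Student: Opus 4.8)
The plan is to prove \Cref{thm:example} constructively and then to analyse the resulting centre line field by hand. I would build $f$ as a perturbation of a linear expanding map that is localised near a single circle, verify partial hyperbolicity straight from the cone definition, and read off the (non)integrability of $E^c$ from the transverse regularity of the invariant line field. Concretely, fix $A:\bbT^2\to\bbT^2$ induced by $\operatorname{diag}(\lambda_1,\lambda_2)$ with integer eigenvalues $1<\lambda_1<\lambda_2$, and write $\bbT^2=\bbR^2/\bbZ^2$ with coordinates $(x,y)$ so that the slow (centre) direction of $A$ is horizontal and the fast direction vertical. I would let $f$ agree with $A$ outside a thin annulus $\mathcal{A}$ around the circle $C_0=\{y=0\}$ and insert inside $\mathcal{A}$ a perturbation that modifies only the centre dynamics and is tangent to $A$ to high order along $C_0$. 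Since the strong vertical expansion is untouched, the narrow vertical cone family $\Cone^u$ of $A$ stays $Df$-invariant and uniformly expanded, so $f$ is a partially hyperbolic endomorphism and, by the result quoted above, carries a continuous $Df$-invariant centre line field $E^c$.

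The perturbation is designed so that $\mathcal{A}$ is a \emph{periodic centre annulus}: its boundary circles are periodic centre leaves and the return dynamics inside $\mathcal{A}$ is no longer expanding in the centre direction. This mechanism is unavailable for a genuinely expanding linear map and is precisely what will keep the branching locus from equidistributing. Inside $\mathcal{A}$ I would arrange the slope of $E^c$ to vanish along $C_0$ while behaving like $\dist(\cdot,C_0)^{1/2}$ transverse to it, so that the centre curves solve a scalar ODE of the non-Lipschitz form $dy/dx\sim|y|^{1/2}$. Exactly as for the classical equation with right-hand side $|y|^{1/2}$, the circle $C_0$ is itself an integral curve but there are also integral curves that peel away from it; hence through each point of $C_0$ there pass at least two distinct $C^1$ centre curves, and $E^c$ fails to be uniquely integrable along $C_0$.

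Because $f$ is a local diffeomorphism and $E^c$ is $Df$-invariant, non-unique integrability holds at a point exactly when it holds at its image, so the branching locus $B$ is fully $f$-invariant and therefore contains the whole grand orbit of $C_0$. I would show that the periodic centre annulus confines the relevant preimages: rather than spreading across $\bbT^2$, the grand orbit of $C_0$ consists of countably many disjoint circles that accumulate on the boundary of $\mathcal{A}$, which both produces the claimed countably infinite family and keeps $\overline{B}$ nowhere dense. On the complement of $\overline{B}$ the centre field is smooth, or at least locally Lipschitz by the domination between the centre and unstable rates away from $\mathcal{A}$, so Picard--Lindel\"of gives a unique integral curve through every such point; since $\overline{B}$ is nowhere dense, this complement is open and dense, completing the two assertions of the theorem.

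The hard part is carrying out the previous two steps at once. The invariant centre direction is not free to prescribe, so I must tune $\lambda_1,\lambda_2$ and the profile of the perturbation so that the genuinely $Df$-invariant field $E^c$ has exactly the square-root degeneracy at $C_0$ that forces branching, and no milder or wilder behaviour. More subtly, I must reconcile two competing demands: the unstable cone must expand for $f$ to be partially hyperbolic, yet the transverse dynamics along the branch circles must fail to expand so that the grand orbit of $C_0$ stays nowhere dense instead of equidistributing as it would under a uniformly expanding transverse map. It is the periodic centre annulus that is meant to resolve this tension, and verifying that it simultaneously confines the grand orbit and leaves the unstable cone intact is the conceptual and computational core of the construction.
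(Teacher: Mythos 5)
Your proposal contains the right ingredients in name (a perturbation near a circle, a periodic centre annulus, branching plus an open dense set of unique integrability), but the way you have assembled them is internally inconsistent, and the tension you yourself flag at the end is not a technical difficulty to be overcome --- it is fatal to your setup. You place the annulus $\mathcal{A}$ around a circle $C_0=\{y=0\}$ tangent to the centre, with the unstable (vertical) dynamics ``untouched,'' i.e.\ equal to $y\mapsto\lambda_2 y$. Then $f(\mathcal{A})=\bbS^1\times(-\lambda_2\epsilon,\lambda_2\epsilon)\supsetneq\mathcal{A}$, so $\mathcal{A}$ is not invariant or periodic, its boundary circles are not periodic centre leaves, and the grand orbit of $C_0$ is exactly the family of circles $\{y=k/\lambda_2^n\}$, which is \emph{dense} in $\bbT^2$. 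Since, as you correctly observe, non-unique integrability propagates to preimages under a local diffeomorphism, your branching locus would be dense and there could be no open dense set of unique integrability: your construction, if it branched at all, would contradict the second assertion of \cref{thm:example}. Confining the grand orbit requires the dynamics transverse to the branching circles to be non-expanding near the annulus, which is exactly what your untouched vertical cone forbids; no choice of ``profile'' inside $\mathcal{A}$ can undo this, because the preimage structure of $C_0$ is dictated entirely by the unperturbed $y$-dynamics.

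The paper escapes this by turning the annulus $90$ degrees and paying a price you do not pay. Its annulus is $X=(-a,a)\times\bbS^1$, bounded by circles tangent to the centre, and the transverse direction carries the degree-$4$ circle map $g$, which is expanding ($g'\geq 4$) \emph{outside} $X$ --- so preimages of the open annulus are dense, yielding the open dense set --- but contracting ($g'(0)<1$) toward an invariant core circle \emph{inside} --- so $X$ is genuinely invariant and the preimages of $\partial X$ form only countably many circles. The price is that the unstable direction must rotate (horizontal outside $X$, vertical-ish inside), and for the naive product map the interpolating cone family is invariant but not expanded near the core (horizontal vectors there are contracted by $g'(0)<1$). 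This is precisely what the shear $\varphi$ with $\varphi'(0)>1$ repairs (\cref{coneeps}), and it is also what changes the homotopy class; your requirement that the perturbation be tangent to $A$ to high order along $C_0$ forbids any such shear. Finally, your branching mechanism is not available as stated: $E^c$ cannot be prescribed, and high-order tangency to $A$ along $C_0$ would force $E^c$ to be Lipschitz (indeed smooth) near $C_0$, hence uniquely integrable there by Picard--Lindel\"of, killing the intended square-root degeneracy. The paper never controls the modulus of continuity of $E^c$ at all; it proves branching at $\partial X$ softly: if $E^c$ were uniquely integrable, backward iterates of a centre arc joining the core to an interior point could not merge into the boundary leaf $\{a\}\times\bbR$ and so would have to grow unboundedly in the vertical direction, contradicting the confinement estimate of \cref{boxes}, which holds because the inverse of the linearisation contracts vertically. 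Any repaired version of your argument would need substitutes for both of these ideas.
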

	
	 We will prove \cref{thm:example} by constructing a geometric mechanism called an \emph{invariant centre annulus}, which is an immersed open annulus $X\subset\bbT^2$ such that $f(X)=X$ and whose boundary, which necessarily consists of either one or two disjoint circles, is tangent to the centre direction. Note that the case when the boundary is one circle is precisely when the closure of the annulus is $\bbT^2$. If $X$ is an invariant annulus for some positive iterate $f^n$ of $f$, then we call $X$ a \emph{periodic centre annulus}. The annulus in the example is constructed by taking a linear expanding map, opening up an invariant circle to obtain an invariant annulus, and then applying a shear on the interior of the invariant annulus. Partial hyperbolicity of the example is established by the construction of an unstable cone-family, and the invariant annulus on which the shear was applied becomes an invariant centre annulus. The centre direction is uniquely integrable on this open annulus, but not along its boundary, and so we observe the behaviour of $E^c$ by taking preimages of the annulus. The complete construction of this example is carried out in \cref{sec:eg}.

  We present a very specific example in \cref{thm:example} for concreteness, but in \cref{sec:gen} we generalise this procedure to construct a family of examples to prove the following result.
	\begin{theorem}\label{thm:allclasses}
		Let $A:\bbT^2\to\bbT^2$ be a linear map with integer eigenvalues and at least one eigenvalue greater than $1$. Then there exists a partially hyperbolic surface endomorphism which is homotopic to $A$ and is dynamically incoherent.
	\end{theorem}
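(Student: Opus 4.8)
The plan is to run, for every admissible $A$, the same \emph{open-and-shear} procedure that yields \cref{thm:example}, producing in each case an invariant (or periodic) centre annulus whose boundary carries branching; branching of centre curves forbids any $f$-invariant foliation tangent to $E^c$, which is exactly dynamical incoherence. First I would reduce to a normal form. Partial hyperbolicity, dynamical coherence and the homotopy class are all preserved under conjugation by a toral automorphism $P\in GL(2,\bbZ)$: if $g$ is a dynamically incoherent partially hyperbolic endomorphism homotopic to $PAP^{-1}$, then $P^{-1}gP$ is one homotopic to $A$. So it suffices to treat a convenient representative of the $GL(2,\bbZ)$-conjugacy class of $A$. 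Since $A$ has integer eigenvalues it has a primitive integer eigenvector, which we complete to a basis of $\bbZ^2$; in the resulting coordinates
\[
	A=\begin{pmatrix}\lambda_1 & s\\ 0 & \lambda_2\end{pmatrix},\qquad \lambda_1,\lambda_2\in\bbZ,\ s\in\bbZ,
\]
where $\lambda_2>1$ is the eigenvalue supplied by hypothesis and $|\lambda_2|\ge|\lambda_1|\ge 1$ (we take $\lambda_2$ of largest modulus). I would designate the horizontal $\lambda_1$-eigendirection as the centre and the $\lambda_2$-eigendirection as unstable, so that the horizontal circle $C=\{y=0\}$ is $A$-invariant and tangent to the centre, with $A|_C$ the linear circle map $x\mapsto\lambda_1 x$.

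Next I would define $f$ by modifying $A$ in a neighbourhood of $C$, exactly as sketched in the introduction: open $C$ into an $f$-invariant centre annulus $X$ whose boundary consists of one or two horizontal (hence centre) circles, and then apply a shear on the interior of $X$ that tilts the centre direction away from $\partial X$ while being flattened to infinite order at $\partial X$. Keeping the shear flat at the boundary ensures $\partial X$ remains a centre leaf and $X$ remains invariant, and keeping it supported near $C$ leaves the unstable $\lambda_2$-dynamics untouched away from the annulus, so $f$ stays a local diffeomorphism homotopic to $A$. When $\lambda_1<0$ the induced boundary circle map is orientation reversing, so $X$ is naturally a \emph{periodic} centre annulus and one works with $f^2$; when $|\lambda_1|=1$ (the non-hyperbolic linearisations) the centre is only neutral, which affects the cone estimates but not the geometry of $X$.

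I would then verify the three requirements. That $f$ is a local diffeomorphism is immediate, since the shear is a diffeomorphism and the opening is a diffeomorphism off $C$. For partial hyperbolicity I would produce an explicit $Df$-invariant, eventually expanded cone family $\Cone^u$ around the $\lambda_2$-eigendirection: for the linear model this is routine once $|\lambda_2|>|\lambda_1|$, and the content is to check it survives both the opening and the shear. For incoherence I would reuse the analysis behind \cref{thm:example}: inside $X$ the shear forces the centre leaves to spiral and limit onto $\partial X$, and because the shear is flat to infinite order there these limits create additional centre curves through boundary points, so centre curves branch along $\partial X$; pulling back under $f^{-n}$ distributes this branching over the dense countable family of preimage circles $f^{-n}(C)=\{y=k/\lambda_2^{\,n}\}$. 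Since a foliation admits a unique leaf through each point, branching rules out any $f$-invariant foliation tangent to $E^c$, and $f$ is dynamically incoherent.

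I expect the main obstacle to be the simultaneous control of cone invariance and shear strength. Forcing the centre leaves to spiral requires the shear to vary appreciably across $X$, yet a large shear in the centre direction tilts steep vectors out of any narrow unstable cone; the shear profile and the aperture of $\Cone^u$ must therefore be co-designed, with the relevant derivative of the shear controlled against the domination gap $|\lambda_2|-|\lambda_1|$. This is most delicate precisely where that gap is smallest: the non-hyperbolic case $|\lambda_1|=1$, and above all the magnitude-equal cases $\lambda_1=\pm\lambda_2$ (including the non-diagonalisable $\lambda_1=\lambda_2$ and examples such as eigenvalues $\{2,-2\}$), where the linear model admits no dominated splitting at all, so the required domination must be manufactured by the nonlinearity of $f$ itself while the branching is kept intact.
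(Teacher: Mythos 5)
The constructive half of your proposal---reducing to a triangular normal form by a $GL(2,\bbZ)$ conjugacy, opening an invariant circle into an invariant (or, for a negative eigenvalue, periodic) centre annulus, shearing inside it, and building the unstable cones by hand with the domination manufactured by making the circle map strongly expanding off the annulus---is essentially what the paper does in \cref{sec:gen} (cf.\ \cref{lem:triangle}), and the equal-modulus case you flag as the main obstacle is handled there exactly as you suggest, by requiring $g'>\lambda$ on the linear region. One economy you miss: for non-hyperbolic linearisations ($|\lambda_1|=1$) the paper does not construct anything, it invokes the previously known incoherent examples of \cite{heshiwang} and \cite{enco}, so only the expanding case needs a new construction.

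The genuine gap is your final step: ``since a foliation admits a unique leaf through each point, branching rules out any $f$-invariant foliation tangent to $E^c$.'' That implication is false. Non-unique integrability of a line field does not preclude a foliation tangent to it: a tangent foliation must select \emph{one} integral curve through each point, not contain \emph{every} integral curve as a leaf. For instance, the continuous line field on $\bbR^2$ with slope $3y^{2/3}$ fails unique integrability along $\{y=0\}$ (the axis and the cubics $y=(x-c)^3$ are all integral curves), yet the cubics alone constitute a foliation tangent to the field. Precisely because of this, the paper claims only non-unique integrability in \cref{thm:example} and states explicitly that it is \emph{unclear} whether the single-annulus example of \cref{sec:eg} is dynamically incoherent. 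Its proof of \cref{thm:allclasses} therefore relies on a mechanism your proposal lacks: \emph{two} adjacent centre annuli sharing a boundary circle, sheared in opposite directions, so that $E^c$ is uniquely integrable on each open annulus with slopes of opposite sign on the two sides of the shared circle (\cref{fig:tan2}). Any foliation tangent to $E^c$ is then forced, on the two open annuli, to consist of these oppositely sloped curves meeting the shared circle, and no foliation chart can exist on a neighbourhood of that circle; this is what rules out coherence. With a single annulus your argument only reproduces \cref{thm:example}; to finish the proof you must either adopt the two-annulus construction or supply a genuine---and by the paper's own admission nontrivial---argument that one-sided branching obstructs every invariant tangent foliation.
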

	For the non-hyperbolic case, where $|\lambda_2|>|\lambda_1|=1$, examples establishing \cref{thm:allclasses} have already been constructed in \cite{heshiwang} and \cite{enco}. Thus, we prove the result by constructing incoherent examples homotopic to any expanding linear map with integer eigenvalues.
	
    \medskip{}

	To contrast the difference between the invertible and non-invertible settings,
    we briefly survey the known examples of dynamical incoherence
    in the case of diffeomorphisms.
    For partially hyperbolic diffeomorphisms with a centre bundle of dimension 2
    or higher, it is possible to construct examples with smooth centre bundles
    that are not integrable. In this smooth setting, the integrability, or lack
    thereof, is given by the involutivity condition of Frobenius.
    See \cite{bw} and \cite{hammerlindl-jmd} for further details.
    
    For the case of one-dimensional centre bundle, the question was open much
    longer.
    Since $C^1$ vector fields are always integrable,
    a dynamically incoherent example here would, by necessity,
    have a centre direction which is not $C^1$, or even Lipschitz.
    Rodriguez-Hertz, Rodriguez-Hertz, and Ures
    constructed an example on the 3-torus,
    using an invariant 2-torus tangent to the centre-unstable direction
    \cite{RHRHU}.
    In this example, the non-integrability of the centre direction occurs
    only at this 2-torus and the centre direction is smooth and therefore 
    uniquely integrable everywhere else on the 3-torus.
    In fact, any partially hyperbolic system on the 3-torus,
    can have only finitely many embedded 2-tori tangent to $E^{cs}$ or $E^{cu}$
    and the centre direction is integrable outside these regions \cite{clab}.
    In any dimension,
    a partially hyperbolic diffeomorphism can have 
    only finitely many compact submanifolds tangent either to
    $E^{cs}$ or $E^{cu}$ \cite{prox}.
    
    More recently, new examples of partially hyperbolic diffeomorphisms
    have been discovered on the unit tangent bundles of
    surfaces of negative curvature \cite{BGHP}.
    For certain homotopy classes,
    these systems are dynamically incoherent.
    Further, these examples have unique branching foliations tangent to the $E^{cs}$ and $E^{cu}$
    directions and the branching (i.e.,~merging of distinct leaves)
    occurs at a dense set of points. 
    The dynamical incoherence of such examples may therefore be regarded as a global phenomenon.
    
    In the examples we construct to prove \cref{thm:allclasses}, the branching of the centre direction occurs at an infinite collection of
    circles tangent to the centre and the closure of this collection gives a
    lamination consisting of uncountably many circles.
    Moreover, outside this lamination, the centre direction is uniquely integrable
    and consists of lines.
    The branching is therefore not global, nor is it confined to a submanifold.
    This type of dynamical incoherence is possible due to the non-invertible nature
    of partially hyperbolic endomorphism.
	
    The behaviour of the examples in \cref{thm:allclasses} is also distinct from the previously known incoherent endomorphisms on $\bbT^2$. Namely, the examples in \cite{heshiwang} and \cite{enco} both have centre curves branch on the boundary of a submanifold, and so are analogous to diffeomorphisms in dimension 3. Furthermore, these preexisting examples admit invariant unstable directions much like the invertible setting, while the examples of the current paper do not.
	
	\cref{thm:allclasses} also has an immediate consequence to a previously unanswered question about which homotopy classes admit endomorphisms. Given a partially hyperbolic endomorphism of $\bbT^2$, its linearisation $A$ is given by an invertible integer-entried $2\times2$ matrix, and existing techniques show that $A$ must have real eigenvalues. If the eigenvalues have distinct magnitude, then $A$ itself induces a partially hyperbolic endomorphism. This is not true if the eigenvalues have equal magnitude, and it was unknown if there can exist a partially hyperbolic endomorphism which is homotopic to such an $A$. In particular, the question of existence had been posed for when $A$ is twice the identity, and this is now answered by an immediate corollary of \cref{thm:allclasses}.
	\begin{corollary}\label{corollary}
		There exists a partially hyperbolic surface endomorphism which is homotopic to 
		$ \begin{psmallmatrix}
			2 & 0 \\
			0 & 2
			\end{psmallmatrix}.
	    $
	\end{corollary}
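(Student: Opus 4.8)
The plan is to deduce this directly from \cref{thm:allclasses}. Consider the linear map $A:\bbT^2\to\bbT^2$ induced by the matrix $\begin{psmallmatrix} 2 & 0 \\ 0 & 2 \end{psmallmatrix}$. This matrix has eigenvalues $\lambda_1=\lambda_2=2$, both of which are integers and both strictly greater than $1$. Hence $A$ satisfies the hypotheses of \cref{thm:allclasses}: it is a linear map with integer eigenvalues and at least one (indeed both) eigenvalue greater than $1$. Applying \cref{thm:allclasses} therefore produces a partially hyperbolic surface endomorphism $f$ homotopic to $A$ which is dynamically incoherent; in particular such an $f$ exists, which is all the statement asks.

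The only point worth emphasising is why this requires the full strength of \cref{thm:allclasses} rather than simply taking $f=A$. As noted in the introduction, when the two eigenvalues of $A$ have equal magnitude the linear map $A$ does not itself admit a $DA$-invariant unstable cone family with the required expansion, so it is not a partially hyperbolic endomorphism in its own right. Thus one genuinely needs the construction carried out in \cref{sec:gen}, which perturbs $A$ by opening an invariant circle into an invariant centre annulus and applying a shear, in order to obtain a partially hyperbolic representative of the homotopy class. Beyond verifying the eigenvalue hypothesis there is no further obstacle, and that verification is immediate here since $\begin{psmallmatrix} 2 & 0 \\ 0 & 2 \end{psmallmatrix}$ has both eigenvalues equal to the integer $2$.
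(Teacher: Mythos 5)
Your proof is correct and matches the paper's approach exactly: the paper also obtains this as an immediate consequence of \cref{thm:allclasses}, since $\begin{psmallmatrix} 2 & 0 \\ 0 & 2 \end{psmallmatrix}$ has both eigenvalues equal to the integer $2 > 1$. Your additional remark on why one cannot simply take $f = A$ when the eigenvalues have equal magnitude is also consistent with the discussion in the paper's introduction.
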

	
	Finally, there are now two qualitatively different forms of incoherent partially hyperbolic surface endomorphisms: those constructed in this paper, and those in \cite{heshiwang} and \cite{enco}. In preparation is a classification of partially hyperbolic surface endomorphisms in \cite{end}, where it is shown that any incoherent example is akin to one of these examples.
	
	\section{Concrete example}\label{sec:eg}
	In this section we construct an explicit example to prove \cref{thm:example}.
    The resulting endomorphism will be homotopic to
		$ \begin{psmallmatrix}
			4 & 0 \\
			2 & 3
			\end{psmallmatrix}.
	    $
    In the next section, we show how to generalize the construction
    to other homotopy classes.

    \pagebreak[3] 

    For small $0<a<1$, let $g:\bbS^1\to\bbS^1$ be a smooth, monotone map which:
	\begin{itemize}
		\item is homotopic to the quadrupling map $x\mapsto 4x$,
		\item has fixed points $x=-a,\,0,\, a$ and no other fixed points in $[-a,a]$,
		\item satisfies $g'(0)<1$,
		\item is linear on the complement of $(-a,a)$, and
		\item $g'(x)\geq 4$ for $x\in\bbS\setminus[-a,a]$.
	\end{itemize} 
	Define $f_0:\bbT^2\to\bbT^2$ by $f_0(x,y)=(g(x),3y)$. Then $f_0$ is a map which is homotopic to the linear map $(x,y)\mapsto(4x,3y)$, and fixes the annulus $[-a,a]\times\bbS^1$. Let $\varphi:[-1/2,1/2]\to[-1,1]$ be a smooth, monotone function which is odd, that is $\varphi(-x)=-\varphi(x)$, such that $\varphi'$ is zero off of $(-a,a)$ while satisfying $\varphi(-a)=-1$, $\varphi(0)=0$, and $\varphi(a)=1$. Then $\varphi$ defines a map to $\bbS^1\to \bbS^1$ which is a shear that is seen only inside the interval $(-a,a)$. We further take $\varphi'(0)>1$, noting that we can take $\varphi'(0)$ to be arbitrarily large by taking the support of $\varphi$ to be smaller. It will also be convenient to take $\varphi$ and $g$ to be linear on a small neighbourhood about each of the points $-a$, $0$ and $a$. Graphs of suitable functions $g$ and $\varphi$ are shown in \cref{fig:graphs}.
	\begin{figure}
	\centering
     \begin{subfigure}[b]{0.3\textwidth}
     \centering
            \includegraphics{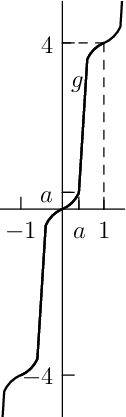}       
     \end{subfigure}
     \begin{subfigure}[b]{0.5\textwidth}
     \centering
                \includegraphics{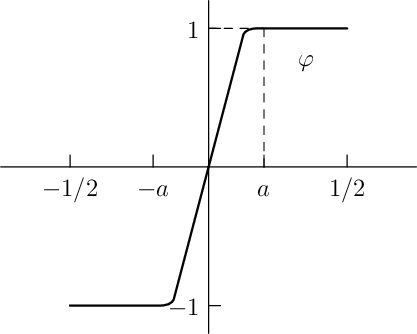}
     \end{subfigure}

        \caption{Graphs of the functions $g$ and $\varphi$ used to define the example $f$.}
        \label{fig:graphs}
\end{figure}
	Our example $f:\bbT^2\to\bbT^2$ is an explicit deformation of $f_0$ defined by $f(x,y) = (g(x),3y+\varphi(x))$. From our construction, we see the linearisation $A:\bbT^2\to\bbT^2$ of $f$ is given by
	\[
	B =
	\left(
	\begin{matrix}
	4 & 0 \\
	2 & 3
	\end{matrix}
	\right),
	\]
	and so $f$ is indeed homotopic to a linear expanding map.
	
	Next we establish that $f$ is indeed partially hyperbolic by building an unstable cone family. The derivative of $f$ at a point $(x,y)\in\bbT^2$ is given by
	\[ D_{(x,y)}f = \left(
	\begin{matrix}
	g'(x) & 0 \\
	\varphi'(x) & 3
	\end{matrix}
	\right).
	\]
	Note that $\Lambda = \{0\}\times\bbS^1\subset\bbT^2$ is an $f$-invariant circle, and that on this circle, the derivative is given by
	\begin{equation*} D_{\Lambda}f = \left(
	\begin{matrix}
	g'(0) & 0 \\
	\varphi'(0) & 3
	\end{matrix}
	\right).
	\end{equation*}
	 Since $g'(0)<1$, then $\Lambda$ is an invariant hyperbolic attractor, and we will use this to define a cone-family on a neighbourhood of $\Lambda$. Let $U_{\Lambda}$ be a small open neighbourhood of $\Lambda$ on which $\varphi'$ and $g'$ are constant. For $p\in U_{\Lambda}$ and $\eps>0$, define a constant cone family $\Cone_{\eps}(p)$ as the cone containing the first quadrant with boundary given by the slopes of $(1,-\eps)$ and $(-\eps,1)$ as depicted in \cref{fig:cones}.
	 
	 \begin{figure}[h]
                        \includegraphics{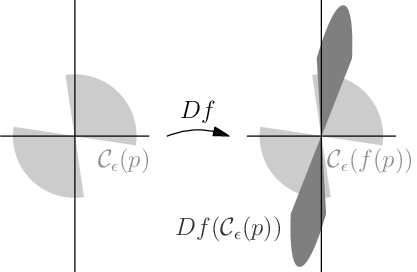}
                        	    \caption{The cone family $\Cone_{\eps}$ with the behaviour as proved in \cref{coneeps}}.
                        	    \label{fig:cones}
    \end{figure}
	\begin{lem}\label{coneeps}
		There is $\eps>0$ such that $\Cone_{\eps}$ is expanded by $Df$, and $Df(\Cone_{\eps}(p))\subset\sint(\Cone_{\eps}(f(p)))$ for all $p\in U_{\Lambda}$.
	\end{lem}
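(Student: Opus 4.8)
The plan is to reduce everything to a single linear-algebra computation with the constant matrix $D_\Lambda f$. First I would observe that on $U_\Lambda$ both the cone family and the derivative are constant: since $g$ is linear near $0$ with $g'(0)<1$, the strip $U_\Lambda$ is forward-invariant, so for $p\in U_\Lambda$ the point $f(p)$ again lies in $U_\Lambda$ and the cone $\Cone_\eps(f(p))$ appearing in the desired inclusion is defined, while $D_pf$ is exactly the fixed matrix $D_\Lambda f$ and $\Cone_\eps(p)=\Cone_\eps(f(p))$. Writing $\mu=g'(0)\in(0,1)$ and $s=\varphi'(0)>1$, and recording that
\[
\Cone_\eps=\{(v_1,v_2): \eps v_1+v_2\ge 0 \text{ and } v_1+\eps v_2\ge 0\},
\]
the lemma reduces to producing $\eps>0$ with $D_\Lambda f(\Cone_\eps)\subset\sint(\Cone_\eps)$ and $\|D_\Lambda f\,v\|>\|v\|$ for every nonzero $v\in\Cone_\eps$.

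For the invariance I would use that $\det D_\Lambda f=3\mu>0$ — note $\mu>0$ because $f$ is a local diffeomorphism and hence $g'$ never vanishes — so $D_\Lambda f$ preserves orientation and carries the convex cone $\Cone_\eps$ onto the convex cone spanned by the images of its two edge rays $(1,-\eps)$ and $(-\eps,1)$, namely $(\mu,\,s-3\eps)$ and $(-\mu\eps,\,3-s\eps)$. It then suffices to substitute these two vectors into the strict versions of the two defining inequalities above; each becomes a positive constant plus a term of order $\eps$ (using $\mu>0$ and $\mu<3$), and so is satisfied once $\eps$ is small.

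The expansion estimate is where I expect the only genuine difficulty, because $D_\Lambda f$ does contract in some directions and one must be sure the cone avoids them. The clean way to handle this is to note that $\|D_\Lambda f\,v\|>\|v\|$ is equivalent to positivity of the quadratic form
\[
Q(v)=v^\top\!\big((D_\Lambda f)^\top D_\Lambda f-I\big)v=(\mu^2+s^2-1)v_1^2+6s\,v_1v_2+8v_2^2 .
\]
This form is indefinite, so its zero set is a pair of lines through the origin; their slopes solve $8t^2+6st+(\mu^2+s^2-1)=0$, and the hypothesis $\varphi'(0)>1$ (which gives $s^2>1-\mu^2$) forces both roots $t_-<t_+$ to be \emph{negative}. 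Hence $Q(v)>0$ whenever $v_1>0$ with slope $v_2/v_1>t_+$, and symmetrically for $v_1<0$ when the slope is below $t_-$; in particular $Q>0$ on all of $\Cone_\eps$ as soon as the cone's lower edge slope $-\eps$ exceeds $t_+$ (its near-vertical edge automatically staying below $t_-$), which again holds for $\eps$ small. I would finish by fixing one $\eps>0$ small enough to meet simultaneously the finitely many smallness requirements gathered above; this $\eps$ then witnesses both conclusions. The key structural point, and the main obstacle, is precisely that the contracting directions of $D_\Lambda f$ all have slope bounded away from the cone — a fact that relies on the shear strength $\varphi'(0)$ being larger than $1$.
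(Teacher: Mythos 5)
Your proof is correct, and although it rests on the same core reduction as the paper's proof --- on $U_{\Lambda}$ everything is governed by the constant matrix $\begin{psmallmatrix} g'(0) & 0 \\ \varphi'(0) & 3 \end{psmallmatrix}$ --- both halves are executed differently, and yours is strictly more complete. For the invariance, the paper states the inclusion $Df(\Cone_{\eps}(p))\subset\sint(\Cone_{\eps}(f(p)))$ and explicitly leaves the verification to the reader; you actually carry it out via the images of the two edge rays $(1,-\eps)$ and $(-\eps,1)$, and you also record the forward invariance of $U_{\Lambda}$, which is needed for $\Cone_{\eps}(f(p))$ to even be defined and which the paper passes over silently. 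For the expansion, the paper's argument is softer: it observes that for $(u_x,u_y)$ in the first quadrant the cross term $6\varphi'(0)u_xu_y$ is nonnegative, so $\|D_pf(u_x,u_y)\|^2 \geq \left(g'(0)^2+\varphi'(0)^2\right)u_x^2+9u_y^2 > \|(u_x,u_y)\|^2$ once $\varphi'(0)>1$, and then invokes continuity (in effect, compactness of the unit vectors in the quadrant) to enlarge the quadrant to $\Cone_{\eps}$ for small $\eps$. You instead analyse the form $Q(v)=\|D_pf\,v\|^2-\|v\|^2$ on the whole tangent plane, locate its zero lines, and show both have negative slope --- this is precisely where $\varphi'(0)>1$ enters, as positivity of the product of roots --- so that $\Cone_{\eps}$ misses the band of contracted directions once $\eps$ is small. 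Your route is more quantitative: it says exactly how small $\eps$ must be ($-\eps>t_+$ and $-1/\eps<t_-$) and exhibits the contracting directions explicitly, while the paper's first-quadrant trick is shorter but gives no control on $\eps$. Two small nits: you assert that $Q$ is indefinite without checking it (it is: the determinant of the associated matrix is $8g'(0)^2-\varphi'(0)^2-8<0$ since $g'(0)<1<\varphi'(0)$; and even if it were not, the alternative forced by $Q(0,1)=8>0$ is positive definiteness, which makes expansion automatic), and the fourth edge-ray inequality is $\eps\left(3-g'(0)-\varphi'(0)\eps\right)$, i.e.\ of order $\eps$ rather than ``a positive constant plus $O(\eps)$'', though it is still positive for small $\eps$ exactly as you need.
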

	\begin{proof}
		If $p\in U_{\Lambda}$, then $Df$ is given by the constant matrix $
		\begin{psmallmatrix}
		g'(0) & 0 \\
		\varphi'(0) & 3
		\end{psmallmatrix}$
		 and one can use this to show that $Df(\Cone_{\eps}(p))\subset\sint(\Cone_{\eps}(f(p)))$. We leave the proof of this to the reader.
		
		To show that there is $\eps$ such that $Df$ expands $\Cone_{\eps}$, by continuity, it suffices to show that $Df$ expands all vectors in the first quadrant. So let $(u,v)\in T_p\bbT^2$ be non-zero lying in the first quadrant; that is, $u,v\geq 0$.
        The derivative is
		$
		D_pf (u,v) = (g'(0)u,\varphi'(0)u + 3v)
		$
		with $\varphi'(0)>1$. Then
        \[
            (\varphi'(0)u + 3v)^2
            \ \ge \ 
            (\varphi'(0) u)^2 + (3v)^2
            \ > \ 
            u^2 + v^2,
        \]
        implies that $\|D_pf (u,v)\|>\|(u,v)\|$.
	\end{proof}
	For the remainder of this section, we fix $\eps$, and thus $\Cone_{\eps}$, to be as in the preceding lemma.
	
	Next, consider the compact set $K = \bbT^2\setminus \left((-a,a)\times\bbS^1\right)$. This set is `backward invariant' in the sense that $f^{-1}(K)\subset K$. Then $f$ is linear and expanding on $K$, which will allow us to define a natural unstable cone family on a neighbourhood of $K$. Let $U_K$ be a small open neighbourhood of $K$ which is disjoint from $U_{\Lambda}$ and is such that $\varphi'(x) = 0$ and $g'(x)>3$ for all $(x,y)\in U_K$.
	\begin{lem}\label{bone}
		Suppose that $\Bone_{\delta}$ is a cone family on $U_K$ which for $0<\delta<1$ is given at each point by the cone that contains the horizontal and is bounded by the slopes $(1,\delta)$ and $(1,-\delta)$. Then $\Bone_{\delta}$ is expanded by $Df$, and $Df(\Bone_{\delta}(p))\subset\sint(\Bone_{\delta}(f(p)))$ for all $p\in U_K$ which satisfy $f(p)\in U_K$.
	\end{lem}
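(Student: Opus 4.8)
The plan is to exploit the fact that the derivative of $f$ is diagonal throughout $U_K$. By the defining properties of $U_K$, namely $\varphi'(x)=0$ and $g'(x)>3$ for every $(x,y)\in U_K$, the derivative there takes the form
\[
D_{(x,y)}f=\begin{pmatrix} g'(x) & 0 \\ 0 & 3\end{pmatrix}.
\]
Since $\Bone_{\delta}$ is the fixed cone $\{(u_x,u_y):|u_y|\le\delta|u_x|\}$ at every base point, both assertions reduce to a single pointwise computation comparing the image of a vector against this same $\delta$-cone.

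For cone invariance, I would take a nonzero $(u_x,u_y)\in\Bone_{\delta}(p)$, so that $|u_y|\le\delta|u_x|$ and in particular $u_x\ne0$. Applying the derivative gives $D_pf(u_x,u_y)=(g'(x)u_x,\,3u_y)$, whose slope satisfies
\[
\left|\frac{3u_y}{g'(x)u_x}\right|=\frac{3}{g'(x)}\left|\frac{u_y}{u_x}\right|\le\frac{3}{g'(x)}\,\delta<\delta,
\]
because $g'(x)>3$. Thus the image has slope strictly smaller in magnitude than $\delta$ and hence lies in $\sint(\Bone_{\delta}(f(p)))$, which is exactly the required invariance for those $p$ with $f(p)\in U_K$.

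For expansion, I would compare norms directly. Using the diagonal form,
\[
\|D_pf(u_x,u_y)\|^2-\|(u_x,u_y)\|^2=\big(g'(x)^2-1\big)u_x^2+8u_y^2,
\]
and this is strictly positive whenever $(u_x,u_y)\ne0$: the coefficient $g'(x)^2-1$ is positive since $g'(x)>3>1$, and $u_x\ne0$ for any nonzero vector of the cone. Hence $Df$ expands every vector in $\Bone_{\delta}$.

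I expect no substantial obstacle here; the entire content is the observation that the shear term $\varphi'$ vanishes on $U_K$, so that $f$ behaves like a diagonal linear expanding map in which the horizontal direction expands strictly faster than the vertical. The only point requiring a little care is that the cone family is \emph{constant} in the base point, so that the comparison at $f(p)$ is against the same $\delta$-cone; the strict inequality $g'(x)>3$ is precisely what makes both the slope contraction and the norm expansion strict.
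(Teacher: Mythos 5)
Your proposal is correct and follows essentially the same route as the paper: the paper's proof likewise observes that $Df$ is the diagonal matrix $\begin{psmallmatrix} g'(x) & 0 \\ 0 & 3\end{psmallmatrix}$ on $U_K$ with $g'(x)>3$, and concludes that it is expanding and contracts slopes toward the horizontal; you have simply written out the slope and norm computations that the paper leaves implicit.
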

	\begin{proof}
		By definition, if $(x,y)\in U_K$, then
		\[
		D_{(x,y)}f = \left(
		\begin{matrix}
		g'(x) & 0 \\
		0 & 3
		\end{matrix}
		\right).
		\]
		Since $g'(x)>3$, $Df_{(x,y)}$ is expanding. Moreover, $Df_{(x,y)}$ preserves both the horizontal and vertical, and expands the horizontal more than the vertical. This implies the result.
	\end{proof}
	 A depiction of a cone family $\Bone_{\delta}$ as in the preceding lemma is shown in \cref{fig:bone}.
	\begin{figure}[h]
	        \includegraphics{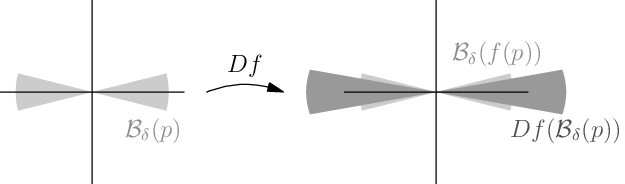}
	    \caption{A cone family $\Bone_{\delta}$ with the properties stated in \cref{bone}.}
	    \label{fig:bone}
	\end{figure}

	 In \cref{fig:zones} we depict the regions $U_{\Lambda}$ and $U_K$ over which we have defined $\Cone_{\eps}$ and $\Bone_{\delta}$. The next step is to `stitch together' these two cone families to obtain a global one. Let $V\subset (-a,a)\times \bbS^1$ be an open strip such that $V \cup U_K =\bbT^2$ and $f(V)\cap U_K = \varnothing$. Note that the definition of the circle map $g$ implies that $f(V)\Subset V$, and so there is $N$ such that $f^N(V)\subset U_{\Lambda}$. We pull back $\Cone_{\eps}$ to define a cone family $\Cone^N$ on $V$ by $\Cone^N(p) = Df^{-N}(\Cone_{\eps}(f^N(p)))$ at $p\in V$.
	 
	 \begin{figure}[h]
        \includegraphics{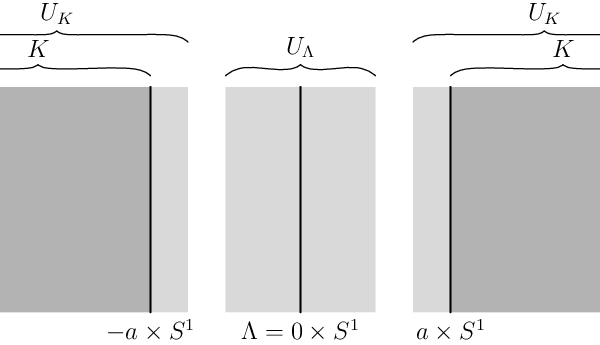}
     \caption{The regions $U_{\Lambda}$ and $U_K$ in $\bbT^2$ we over which we have defined $\Cone_{\eps}$ and $B_{\delta}$. We glue these cone families together across the gap between these regions to obtain $\Cone^u$.}
     \label{fig:zones}
    \end{figure}
	 \begin{lem}\label{lem:conen}
	 	For $p\in V$, the cone $\Cone^N(p)$ is a closed neighbourhood of the horizontal direction.
	 \end{lem}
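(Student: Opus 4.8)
The plan is to reduce the statement to a single computation about how $Df^N$ moves the horizontal direction. Since $f$ is a local diffeomorphism, $Df^N_p$ is a linear isomorphism, so $\Cone^N(p) = Df^{-N}(\Cone_{\eps}(f^N(p)))$ is automatically a closed convex cone, being the preimage of the closed convex cone $\Cone_{\eps}(f^N(p))$ (which is defined because $f^N(V)\subset U_{\Lambda}$). Because a linear isomorphism carries interiors to interiors, $\Cone^N(p)$ is a neighbourhood of the horizontal direction if and only if $Df^N_p(1,0)$ lies in the interior $\sint(\Cone_{\eps}(f^N(p)))$. So it suffices to track the forward image of the horizontal direction under the iterated derivative.

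To do this, I would follow the slope of the iterated image. Writing $p_k = f^k(p) = (x_k,y_k)$, the derivative $D_{p_k}f = \begin{psmallmatrix} g'(x_k) & 0 \\ \varphi'(x_k) & 3 \end{psmallmatrix}$ sends a direction of finite slope $s$ to one of slope $\frac{\varphi'(x_k)+3s}{g'(x_k)}$. Starting from the horizontal, with $s_0 = 0$, this defines a sequence $s_{k+1} = \frac{\varphi'(x_k)+3s_k}{g'(x_k)}$ for $k = 0,\dots,N-1$. The key structural input here is not any expansion estimate but the monotonicity of $g$ and $\varphi$: these give $g'(x_k) > 0$ and $\varphi'(x_k)\geq 0$ at \emph{every} point, so an immediate induction shows $s_k \in [0,\infty)$ for all $k$. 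In particular $Df^N_p(1,0)$ has nonnegative finite slope, i.e.\ it lies in the closed first quadrant, with strictly positive first coordinate $\prod_{k} g'(x_k) > 0$.

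It then remains to observe that the closed first quadrant is contained in $\sint(\Cone_{\eps}(f^N(p)))$: by definition $\Cone_{\eps}$ is bounded by the slopes of $(1,-\eps)$ and $(-\eps,1)$, so it strictly contains the first quadrant and hence places the positive $x$- and $y$-axes, together with everything between them, in its interior. This yields $Df^N_p(1,0)\in\sint(\Cone_{\eps}(f^N(p)))$, and pulling back by the isomorphism $Df^{-N}$ shows the horizontal lies in $\sint(\Cone^N(p))$, completing the proof. I expect the only genuine bookkeeping to be the induction showing the slope stays nonnegative and finite; the point worth stating carefully is that we only need membership in the \emph{closed} first quadrant and that this region sits \emph{strictly} inside $\Cone_{\eps}$, so the argument needs neither \cref{coneeps} nor any control on the locations of the intermediate points $p_k$.
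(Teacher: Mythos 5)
Your proof is correct, but it runs the argument in the opposite direction from the paper. The paper pulls the cone back one step at a time: it computes the local inverse $D_qf^{-1}$ explicitly, notes that since $\varphi',g'\geq 0$ this inverse sends vectors with $u_y<0<u_x$ (resp.\ $u_x<0<u_y$) to vectors satisfying the same sign conditions, and concludes by induction that each pullback of $\Cone_{\eps}$ remains a closed neighbourhood of the first and third quadrants, hence of the horizontal. You instead push the horizontal forward: the slope recursion $s_{k+1}=\bigl(\varphi'(x_k)+3s_k\bigr)/g'(x_k)$ keeps $Df^N_p(1,0)$ in the closed first quadrant, which sits strictly inside $\Cone_{\eps}$, and you then transfer this back through the linear isomorphism $Df^{-N}_p$ (the symmetry of the cones, which correspond to quadratic forms, takes care of $(-1,0)$). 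Both arguments hinge on the identical structural fact---the derivative is lower triangular with $g'>0$ and $\varphi'\geq 0$ at \emph{every} point, so no control of the intermediate orbit points is needed---and in that sense they are dual: sign-preservation of $Df$ on the closed first quadrant versus sign-preservation of $Df^{-1}$ on the open second and fourth quadrants. Yours is the more economical route, since it tracks a single vector and avoids the bookkeeping about cone complements that the paper's induction quietly relies on. What the paper's version buys is a reusable by-product: its explicit computation of $Df^{-1}$ and the fact that it preserves negative-slope vectors on $X$ is exactly what is invoked later in \cref{lem:negativeslope} (where the citation to \cref{glue} is really to this computation); with your proof, that computation would still need to be carried out separately at that point.
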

 	\begin{proof}
 	    We show the result by considering how $Df$ pulls back tangent vectors that lie in the second and fourth quadrants of the tangent space. For $q\in \bbT^2$, choose a point $q'= (x,y) \in f^{-1}(q)$. Given a tangent vector $(u,v)\in T_q \bbT^2$, the tangent vector $(\hat{u},\hat{v}) = D_{q'}f^{-1}(u,v) \in T_{q'}\bbT^2$ is given explicitly by 
 		\[(\hat{u},\hat{v})= D_{q'}f^{-1}(u,v) = \left(\frac{u}{g'(x)}, -\frac{\varphi'(x)}{3g'(x)}u + \frac{1}{3}v\right).\]
  Since $\varphi',g'\geq 0$, then $v<0<u$ implies $\hat{v}<0<\hat{u}$. Similarly, $u<0<v$ implies $\hat{u}<0<\hat{v}$. So $D_{q'}f^{-1}$ maps the second and fourth quadrants into themselves, meaning if we take any cone of tangent vectors in $ T_q\bbT^2$ which encloses the first and third quadrants, the pullback of this cone under $Df$ will also enclose these quadrants in $T_{q'}\bbT^2$.

 		Now given $p\in V$, let $q\in U_{\Lambda}$ be such that $f^N(p)=q$. By definition, $\Cone_{\eps}(q)$ is a neighbourhood of the first and third quadrants, and $\Cone^N(p)$ is $N$ successive pullbacks of $\Cone_{\eps}(q)$ by $Df$. Thus, by induction using the property shown in the preceding paragraph, $\Cone^N(p)$ is a closed neighbourhood of the first and third quadrants. In particular, the claim of the lemma holds.
 	\end{proof}
 	
	\begin{lem}\label{compatible}
		There is a cone family $\Bone$ on $U_K$ such that following hold:
		\begin{itemize}
			\item if $p\in U_K$ and $f(p) \in U_K$, then $ Df \Bone(p)\subset \sint \Bone(f(p))$;
			\item if $p\in U_K$ and $f(p)\in V$, then $Df\Bone(p)\subset\sint \Cone^N(f(p))$;
			\item if $p\in V \cap U_K$, then $\Bone(p)\subset \sint\Cone^N(p)$.
		\end{itemize}
	\end{lem}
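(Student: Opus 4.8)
The plan is to take $\Bone$ to be one of the cone families $\Bone_{\delta}$ from \cref{bone}, with $\delta>0$ to be chosen small; the three bullet points will then hold for all sufficiently small $\delta$. With this choice the first bullet is nothing other than the conclusion of \cref{bone}, so the real work lies in the remaining two, both of which I would deduce from \cref{lem:conen} together with a compactness argument that produces a single working value of $\delta$.

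For the third bullet, recall from \cref{lem:conen} that $\Cone^N(p)$ is a closed neighbourhood of the horizontal for every $p\in V$, and that $\Cone^N$ is continuous on $V$ since it is the pullback $Df^{-N}(\Cone_{\eps}(f^N(\cdot)))$ of the constant cone family $\Cone_{\eps}$ under a smooth local diffeomorphism. Consequently each $\Cone^N(p)$ contains in its interior the cone of all directions of slope at most some $\mu(p)>0$ about the horizontal, and $p\mapsto\mu(p)$ may be taken continuous and positive. Over the transition region $V\cap U_K$ this yields a uniform lower bound $\mu_0>0$. Since $\Bone_{\delta}(p)$ is exactly the closed cone of slopes in $[-\delta,\delta]$ about the horizontal, any $\delta<\mu_0$ gives $\Bone_{\delta}(p)\subset\sint\Cone^N(p)$ for all $p\in V\cap U_K$, which is the third bullet.

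For the second bullet, let $p\in U_K$ with $f(p)\in V$. On $U_K$ we have $\varphi'=0$ and $g'(x)>3$, so $Df_{p}=\begin{psmallmatrix} g'(x) & 0 \\ 0 & 3\end{psmallmatrix}$ sends the line of slope $s$ to the line of slope $3s/g'(x)$, shrinking the magnitude of every slope by the factor $3/g'(x)<1$. Hence $Df\,\Bone_{\delta}(p)$ is a cone about the horizontal whose slopes lie in $[-3\delta/g'(x),\,3\delta/g'(x)]\subset(-\delta,\delta)$. Applying the uniform-margin argument of the previous paragraph to the images $f(p)\in V$, the interior of $\Cone^N(f(p))$ contains all slopes of magnitude below $\mu_0$, so for $\delta<\mu_0$ we obtain $Df\,\Bone_{\delta}(p)\subset\sint\Cone^N(f(p))$. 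Taking $\delta<\min\{1,\mu_0\}$ then satisfies all three bullets at once.

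The main obstacle is exactly this uniformity: a single $\delta$ must work at every relevant point, which forces the lower bound $\mu_0$ on the angular opening of $\Cone^N$ about the horizontal to be uniform over the transition region. This is where continuity of $\Cone^N$ and compactness enter, and some care is needed because the natural sets $V\cap U_K$ and $f(U_K)\cap V$ abut the boundary of $V$. I would handle this by noting that, since $f^N$ contracts $V$ well inside $U_{\Lambda}$ with room to spare, the pullback defining $\Cone^N$ extends continuously to an open strip strictly containing $\overline{V}$; thus \cref{lem:conen} and the resulting positive margin apply on the closures of these sets. The computation of $Df$ on $U_K$ and the reduction to a slope comparison are then routine.
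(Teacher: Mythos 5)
Your proposal is correct and follows essentially the same route as the paper: take $\Bone=\Bone_{\delta}$ for sufficiently small $\delta$, get the first bullet from \cref{bone}, and deduce the second and third from the fact (\cref{lem:conen}) that $\Cone^N$ is a closed neighbourhood of the horizontal. The paper's own proof is exactly this in three sentences; your additional work (the uniform margin $\mu_0$ via compactness, the slope-shrinking computation of $Df$ on $U_K$, and extending $\Cone^N$ past $\overline{V}$) simply makes explicit the details the paper leaves to the reader.
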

	\begin{proof}
	The first property will hold by letting $\Bone=\Bone_{\delta}$ as in \cref{bone} for any small $\delta>0$. Since we can take $\delta$ arbitrarily small, then to prove the second and third claims, it suffices to show that $\Cone^N$ is a closed neighbourhood of the horizontal at each point. This was established in \cref{lem:conen}.
	\end{proof}
	Recall (see \cite{cropot2015lecture}, \S 2.2) that a continuous cone family $\Cone$ is equivalent to a continuous quadratic form $Q$ on the tangent space. In this correspondence, the cone $\Cone(p)$ at a point $p$ is determined by the quadratic $Q_p$ by $\Cone(p) = \{v\in T_p\bbT^2:Q_p(v)\geq 0\}$.
	\begin{lem}\label{glue}
		The map $f$ admits a cone family $\Cone^u$ such that $Df(\Cone^u)\subset\sint \Cone^u$ and which coincides with $\Bone$ on $U_K$ and $\Cone^N$ on $f(V)$.
	\end{lem}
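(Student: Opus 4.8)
The plan is to pass to the associated quadratic forms and glue them with a partition of unity, using the compatibility relations in \cref{compatible} to control the outcome. Let $Q_{\Bone}$ be a continuous quadratic form on $U_K$ with $\Bone(p)=\{v:Q_{\Bone,p}(v)\geq0\}$, and let $Q_{\Cone^N}$ be one on $V$ with $\Cone^N(p)=\{v:Q_{\Cone^N,p}(v)\geq0\}$. Since $f(V)\cup U_K=\bbT^2$ and $f(V)\subset V$, the sets $U_K$ and $V$ form an open cover of $\bbT^2$. First I would record that $\Cone^N$ is itself $Df$-invariant on $V$: because $\Cone^N(p)=Df^{-N}(\Cone_{\eps}(f^N(p)))$ is pulled back from the $Df$-invariant family $\Cone_{\eps}$ of \cref{coneeps}, the chain rule gives $Df(\Cone^N(p))\subset\sint\Cone^N(f(p))$ whenever $p,f(p)\in V$ (here $f^{N}(f(p))=f^{N+1}(p)\in U_{\Lambda}$, so \cref{coneeps} applies at $f^N(p)$).

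Next I would fix a partition of unity $\{\beta,\gamma\}$ subordinate to $\{U_K,V\}$, that is $\beta+\gamma\equiv1$ with $\operatorname{supp}\beta\subset U_K$ and $\operatorname{supp}\gamma\subset V$, and define the global form $Q^u=\beta\,Q_{\Bone}+\gamma\,Q_{\Cone^N}$, setting $\Cone^u(p)=\{v:Q^u_p(v)\geq0\}$. The strict nesting $\Bone\subset\sint\Cone^N$ from \cref{compatible} is the key algebraic input: on the overlap $Q_{\Bone}(v)\geq0$ forces $Q_{\Cone^N}(v)>0$, so every convex combination obeys $\Bone(p)\subset\Cone^u(p)\subset\Cone^N(p)$ there. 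In particular each $\Cone^u(p)$ is a nondegenerate wedge, hence $\Cone^u$ is a genuine continuous cone family, and $\Cone^u=\Bone$ where $\gamma=0$ while $\Cone^u=\Cone^N$ where $\beta=0$. I would arrange $\beta\equiv1$ on $\bbT^2\setminus V$ and $\gamma\equiv1$ on a neighbourhood of $f(V)$, so that $\Cone^u$ coincides with $\Bone$ on $U_K$ outside the overlap and with $\Cone^N$ on $f(V)$, the interpolation being confined to the annulus $U_K\cap V$.

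The crux is verifying $Df(\Cone^u(p))\subset\sint\Cone^u(f(p))$ for every $p$, and here I would exploit that $g$ drives $|x|$ strictly inward on $(-a,a)$, so $f$ carries all of $V$ into the region where $\Cone^u=\Cone^N$. The verification then splits on the location of $p$. If $p\in V$, then $\Cone^u(p)\subset\Cone^N(p)$ and $f(p)$ lands where $\Cone^u=\Cone^N$, so invariance of $\Cone^N$ gives $Df(\Cone^u(p))\subset Df(\Cone^N(p))\subset\sint\Cone^N(f(p))=\sint\Cone^u(f(p))$. If instead $\Cone^u(p)=\Bone(p)$, then either $f(p)\in U_K$, where $\Cone^u(f(p))\supset\Bone(f(p))$ and \cref{compatible} yields $Df\,\Bone(p)\subset\sint\Bone(f(p))\subset\sint\Cone^u(f(p))$, or $f(p)\notin U_K$, where $\operatorname{supp}\beta\subset U_K$ forces $\Cone^u(f(p))=\Cone^N(f(p))$ and \cref{compatible} yields $Df\,\Bone(p)\subset\sint\Cone^N(f(p))=\sint\Cone^u(f(p))$.

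I expect the main obstacle to be exactly the invariance across the transition annulus, since a convex combination of the two forms is in general neither $\Bone$ nor $\Cone^N$ and the compatibility conditions do not apply to it verbatim. The resolution is the geometric observation above: by siting the interpolation where the inward dynamics of $g$ pushes it into the pure $\Cone^N$ region, every \emph{target} cone I must control is already $\Bone$ or $\Cone^N$, and the sandwich inclusion $\Bone\subset\Cone^u\subset\Cone^N$ converts the strict inclusions of \cref{coneeps}, \cref{bone}, and \cref{compatible} into the strict inclusion required for $\Cone^u$. A minor technical point to check along the way is that $Q^u$ remains of indefinite signature throughout, which the sandwich between two nondegenerate wedges guarantees.
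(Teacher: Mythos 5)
Your proposal is correct and takes essentially the same approach as the paper: both glue the quadratic forms of $\Bone$ and $\Cone^N$ by a convex combination (your partition of unity $\{\beta,\gamma\}$ is the paper's bump function $\alpha$), arranged to be pure $\Cone^N$ on $f(V)$ and pure $\Bone$ off $V$, with invariance deduced from \cref{compatible}. Your write-up is in fact more detailed than the paper's one-line justification, as you explicitly record the $Df$-invariance of $\Cone^N$ on $V$ and the sandwich $\Bone\subset\Cone^u\subset\Cone^N$ on the overlap, both of which the paper uses only implicitly.
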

	\begin{proof}
		Let $\Cone^N$ be prescribed by a quadratic form $P$ on $V$, and $\Bone$ by a form $Q$ on $U_K$. Let $\alpha:\bbT^2\to[0,1]$ be a continuous function such that $\alpha(f(V)) = \{0\}$ and $\alpha(U_K) = \{1\}$. Define a continuous cone family $\Cone^u$ by the quadratic form
		\[
		(1-\alpha)P + \alpha Q.
		\]
		Then $\Cone^u$ coincides with $\Cone^N$ on $f(V)$ and $\Bone$ on $U_K$, and it is invariant by \cref{compatible}.
	\end{proof}
	\begin{prop}\label{ph}
		The endomorphism $f$ is partially hyperbolic.
	\end{prop}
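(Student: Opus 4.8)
The plan is to show that the cone family $\Cone^u$ constructed in \cref{glue} satisfies the two defining conditions of a weakly partially hyperbolic endomorphism: $Df$-invariance (already essentially established) together with uniform expansion of vectors in the cone under some fixed iterate $f^k$. The invariance $Df(\Cone^u) \subset \sint \Cone^u$ is the content of \cref{glue}, so the remaining work is to verify that $f$ is a local diffeomorphism and that the expansion estimate holds globally on $\Cone^u$.

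First I would note that $f$ is a local diffeomorphism: since $g' > 0$ everywhere (as $g$ is monotone with $g'(0) < 1 < 4 \le g'(x)$ off $[-a,a]$, and positive on $[-a,a]$), the determinant $\det D_{(x,y)}f = 3 g'(x)$ is everywhere positive, so $Df$ is invertible at every point. Next I would assemble the expansion estimate from the pieces already proved. On $U_K$ the cone $\Cone^u$ coincides with $\Bone$, which is expanded by $Df$ with expansion factor at least $g'(x) > 3$ by \cref{bone}. On $U_\Lambda$ the cone coincides with $\Cone_\eps$, which is expanded by \cref{coneeps}, there with a factor strictly exceeding $1$ but not necessarily uniformly bounded away from $1$. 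The pulled-back cone $\Cone^N$ on $V$ is, by \cref{lem:conen}, a closed neighbourhood of the horizontal, and on $V$ the map has $g'(x) \ge 4$ away from the central region, so vectors there are also expanded.

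The main obstacle I expect is producing a \emph{uniform} expansion constant, i.e.~finding $k > 0$ with $1 < \|Df^k v^u\|$ for all $v^u \in \Cone^u$, rather than merely $\|Df v^u\| > \|v^u\|$ pointwise. The difficulty is concentrated near the repelling circle $\Lambda = \{0\}\times\bbS^1$, where $g'(0) < 1$: a horizontal vector there is \emph{contracted} in its $x$-component, so a single application of $Df$ need not expand it by a definite factor. The resolution is that every orbit is eventually pushed out of a neighbourhood of $\Lambda$: because $\Lambda$ is a normally hyperbolic repeller and $g$ has no fixed points in $(0,a) \cup (-a,0)$ other than the endpoints, any point other than those on $\Lambda$ leaves $U_\Lambda$ after finitely many forward iterates and thereafter experiences the uniform expansion $g'(x) \ge 4$ coming from the region where $\Cone^u$ is close to horizontal. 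I would therefore argue that for each $v^u \in \Cone^u(p)$ the vertical component grows under $Df$ (since the lower-left entry $\varphi'$ and the diagonal entry $3$ both act to increase it on cone vectors), while the horizontal component is eventually expanded once the orbit exits $U_\Lambda$; combining these over a uniformly bounded number of iterates yields the desired $k$.

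I would close by invoking compactness of $\bbT^2$ and continuity of $\Cone^u$ to upgrade the orbit-by-orbit estimate to a single uniform $k$ and expansion factor, so that $f$ together with $\Cone^u$ satisfies the definition of a weakly partially hyperbolic endomorphism. This completes the verification, establishing \cref{ph}.
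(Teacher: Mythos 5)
Your overall skeleton (invariance from \cref{glue}, then a uniform expansion estimate obtained by splitting orbits into good and bad iterates) matches the paper's proof, but the dynamical picture you use to carry it out is backwards, and this breaks your key step. Since $g'(0)<1$, the point $x=0$ is an \emph{attracting} fixed point of the circle map $g$: orbits in the annulus $(-a,a)\times\bbS^1$ have $x$-coordinates converging monotonically to $0$, so a point that enters $U_{\Lambda}$ never leaves it, and a point of the open annulus never reaches the region where $g'\geq 4$. (The circle $\Lambda$ is transversally attracting; compare the proof of \cref{invtannulus}, where the normal direction of $\Lambda$ is identified with $E^s$, and the proof of \cref{thm:example}, where it is the \emph{backward} iterates that push points toward $\{a\}\times\bbR$.) Consequently your resolution --- that ``any point other than those on $\Lambda$ leaves $U_\Lambda$ after finitely many forward iterates and thereafter experiences the uniform expansion $g'(x)\geq 4$'' --- is false, and with it your argument for expanding nearly-horizontal vectors near $\Lambda$ collapses.

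Moreover, the difficulty you flag at $U_{\Lambda}$ does not actually exist. On $U_{\Lambda}$ the derivative is the \emph{constant} matrix $\begin{psmallmatrix} g'(0) & 0\\ \varphi'(0) & 3\end{psmallmatrix}$, and \cref{coneeps} shows it expands every vector of $\Cone_{\eps}$; since the unit vectors in that cone form a compact set, the expansion factor there is automatically bounded away from $1$. The mechanism is the shear, not escape: $(u_x,0)\mapsto(g'(0)u_x,\varphi'(0)u_x)$ has norm $\sqrt{g'(0)^2+\varphi'(0)^2}\,|u_x|>|u_x|$ because $\varphi'(0)>1$. So your assertion that the factor is ``not necessarily uniformly bounded away from $1$'' is mistaken. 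The genuine issue, which your proposal never addresses, is the transition region $\bbT^2\setminus(U_{\Lambda}\cup U_K)$, where neither \cref{coneeps} nor \cref{bone} applies and a cone vector need not be expanded at all. The paper's proof handles it with one observation: there is a uniform $m$ such that every orbit has at most $m$ points outside $U_{\Lambda}\cup U_K$ --- orbits in $K$ stay in $U_K$ forever because $f^{-1}(K)=K$, while orbits in the annulus cross the compact gap strip in boundedly many iterates (there $g(x)<x$ by a definite amount) and are then absorbed into $U_{\Lambda}$ permanently. Combining the uniform expansion on $U_{\Lambda}\cup U_K$ with the worst-case factor $c^m$ on the bad iterates, where $c=\min_{\|v\|=1}\|Df\,v\|>0$, and taking $k$ large gives $\|Df^k v^u\|>\|v^u\|$. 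Your proposal would become correct if rebuilt around this absorption-into-$U_{\Lambda}$ picture rather than escape from it.
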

	\begin{proof}
		By \cref{glue}, $\Cone^u$ is an invariant cone family. It remains to show that $\Cone^u$ is expanded by $Df^k$ for some $k$. Observe that there is $m>0$ such that if $p\in \bbT^2$, then the orbit of $p$ has at most $m$ points which do not lie in either $U_K$ or $U_{\Lambda}$. We know that $\Cone^u=\Bone_{\delta}$ on $U_K$, while for $q\in U_{\Lambda}$, we have $Df^{N}(\Cone^u(q))= \Cone_{\eps}$. Thus, with the exception of at most $m+N$ points in the orbit of $p$, $Df^n\Cone^u(p)$ lies in either $\Cone_{\eps}$ or $ \Bone_{\delta}$. By \cref{coneeps} and \cref{bone}, $Df$ expands $\Cone_{\eps}$ and $\Bone_{\delta}$. Thus by choosing $k$ sufficiently large, if $v^u\in\Cone^u(p)$ is a unit vector, we have $\|Df^k v^u\| > 1$.
	\end{proof}
    We now know that $f$ is partially hyperbolic, and so it admits an invariant centre direction $E^c$ (see the discussion in the introduction of \cite{enco}). We will later argue that is does not however admit an invariant unstable direction on $\bbT^2$.

    Define an annulus $X = (-a,a)\times\bbS^1$.
	Lift $f$ to a diffeomorphism $\tilde{f}:\bbR^2\to\bbR^2$ which fixes the lift $\tilde{X} = (-a,a)\times\bbR$ of $X$. As the lift of a partially hyperbolic surface endomorphism, the map $\tilde{f}$ admits an invariant splitting $E^c \oplus E^u$, with $E^c$ descending to the centre direction of $f$ on $\bbT^2$ \cite{manepughendo}. Moreover, $\tilde{f}$ is a finite distance from its linearisation $A:\bbR^2\to\bbR^2$, which we remind the reader was defined in the introduction.
	\begin{prop}\label{invtannulus}
		The map $f$ admits an invariant centre annulus $X$, and the centre direction $E^c$ is uniquely integrable on $X$.
	\end{prop}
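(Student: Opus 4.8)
The plan is to treat the two claims in turn: that $X=(-a,a)\times\bbS^1$ is an invariant centre annulus, and that $E^c$ integrates uniquely on this open annulus.

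First I would check $f(X)=X$. The circle map $g$ is monotone and fixes $\pm a$, so it restricts to a homeomorphism of $(-a,a)$ onto itself; meanwhile, for each fixed $x$ the map $y\mapsto 3y+\varphi(x)$ is a degree-$3$ self-cover of $\bbS^1$ and hence surjective. Thus $f(X)=(-a,a)\times\bbS^1=X$. The boundary $\partial X=\{-a,a\}\times\bbS^1$ consists of two vertical circles, and at any of their points $\varphi'=0$, so $Df=\begin{psmallmatrix} g'(\pm a)&0\\0&3\end{psmallmatrix}$ with $g'(\pm a)>3$ (these circles lie in $U_K$). Since $\Cone^u=\Bone_\delta$ is the horizontal cone on $U_K$, the horizontal eigendirection is unstable and the complementary invariant direction $E^c$ is vertical; the boundary circles are therefore tangent to $E^c$, so $X$ is an invariant centre annulus.

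The heart of the argument is to identify $E^c$ on the neighbourhood $U_\Lambda$ of $\Lambda=\{0\}\times\bbS^1$, where $Df$ equals the constant matrix $M=\begin{psmallmatrix} g'(0)&0\\\varphi'(0)&3\end{psmallmatrix}$. Its eigendirections are the vertical (eigenvalue $3$) and the line spanned by $(3-g'(0),-\varphi'(0))$ (eigenvalue $g'(0)<1$); call the latter $w$. Forward orbits in $X$ converge to $\Lambda$, so points of $U_\Lambda$ remain in $U_\Lambda$ and their tangent dynamics is governed by $M^n$; any line other than $w$ is driven by $M^n$ towards the dominant vertical eigendirection, which is $E^u$ and lies in $\Cone^u$. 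As $E^c$ is the continuous $Df$-invariant line field complementary to $\Cone^u$, it must coincide with the constant field $w$ on all of $U_\Lambda$. A constant line field is uniquely integrable, its integral curves being the segments parallel to $w$, so $E^c$ is uniquely integrable on $U_\Lambda$.

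I would then propagate this to all of $X$ using that $f$ is a local diffeomorphism ($\det Df=3g'>0$) and that $E^c$ is $Df$-invariant. For any $p\in X$ there is $n$ with $f^n(p)\in U_\Lambda$, since the first coordinate $g^n(x)\to 0$. Composing the local inverses of $f$ along the orbit $p,f(p),\dots,f^n(p)$ yields a diffeomorphism from a neighbourhood of $f^n(p)$ onto one of $p$ that carries $E^c$ to $E^c$, and hence sets up a bijection between the $C^1$ curves through $p$ tangent to $E^c$ and those through $f^n(p)$. Since the latter is a single curve, so is the former; unique integrability at $p$ follows, and as $p$ was arbitrary, $E^c$ is uniquely integrable on $X$.

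The main obstacle is the identification of $E^c$ near $\Lambda$. The vertical line field is globally $Df$-invariant and is indeed the centre direction near $\partial X$, so it is tempting to conclude $E^c$ is vertical throughout $X$; but along $\Lambda$ the vertical direction is the \emph{unstable} one, and $E^c$ is the transverse contracting eigendirection $w$. Verifying, through the cone family, that $E^c$ is the subdominant rather than the dominant eigendirection on $U_\Lambda$ is the delicate point; once $E^c$ is known to be the constant field $w$ there, the propagation by local inverses is routine.
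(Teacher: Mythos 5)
Your proof is correct, and its skeleton matches the paper's: boundary tangency from the diagonal form of $Df$ on $\{\pm a\}\times\bbS^1$, unique integrability near the invariant circle $\Lambda$, then propagation to all of $X$ by pulling back along local inverses (the paper phrases this as every point of $X$ being a preimage of a point near $\Lambda$). Where you genuinely diverge is the local step at $\Lambda$. The paper lifts $f$ to the diffeomorphism $\tilde{f}|_{\tilde{X}}$, views $\{0\}\times\bbR$ as a normally hyperbolic invariant manifold with splitting $E^u\oplus E^s$ where $E^s$ locally coincides with $E^c$, and invokes classical stable manifold theory (Hirsch--Pugh--Shub \cite{HPS}) to get unique integrability on a neighbourhood of $\Lambda$. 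You instead exploit the fact that the construction made $f$ affine on $U_\Lambda$: the derivative is the constant matrix $M=\begin{psmallmatrix} g'(0)&0\\ \varphi'(0)&3\end{psmallmatrix}$, $U_\Lambda$ is forward invariant, and any $Df$-invariant line field avoiding $\sint\Cone^u$ must equal the contracting eigendirection $w$, since every other line is pushed by $M^n$ toward the vertical; a constant line field is trivially uniquely integrable. Your route buys elementarity --- no appeal to invariant manifold theory --- plus an explicit description of $E^c$ near $\Lambda$ (negative slope, consistent with \cref{lem:negativeslope}). Its cost is robustness: it leans on $g$ and $\varphi$ being linear near $0$, a convenience of this concrete example, whereas the paper's normal hyperbolicity argument needs only $g'(0)<1$ and so transfers directly to the general construction of \cref{sec:gen}. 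One point you should make explicit: to know the vertical lies in the interior of $\Cone^u$ on $U_\Lambda$, note that there $\Cone^u=\Cone^N=M^{-N}\Cone_{\eps}\supset\Cone_{\eps}$ by the invariance in \cref{coneeps}, and the vertical is interior to $\Cone_{\eps}$; with that containment in hand, your contradiction argument closes.
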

	\begin{proof}
	The boundary components of the invariant annulus $(-a,a)\times\bbS^1$ are the circles $\{a\}\times\bbS^1$ and $\{-a\}\times\bbS^1$. Restricted to these circles, $Df$ is given by
	\[
	\left(
	\begin{matrix}
	4 & 0 \\
	0 & 3
	\end{matrix}
	\right).
	\]
	
	Thus $\{a\}\times\bbS^1$ and $\{-a\}\times\bbS^1$ are tangent to $E^c$, and so $X = (-a,a)\times\bbS^1$ is an invariant centre annulus.
	
        To see that $E^c$ is uniquely integrable on $X$, note that the restriction of $\tilde{f}$ to $\tilde{X}$ is a diffeomorphism. Under this diffeomorphism, the set $\{0\}\times\bbR$ is an invariant hyperbolic manifold with splitting $E^u\oplus E^s$. Here, $E^s$ corresponds locally to $E^c$. Using classical stable manifold theory, one may then show that $E^c$ is uniquely integrable on a neighbourhood $U$ of $\Lambda$ \cite{HPS}. But as every point in $p \in X$ is in the preimage of some point in $q\in U$, so $E^c$ is uniquely integrable on all of $X$.
	\end{proof}
	We now establish that the centre curves must branch on the boundary of $X$.
	\begin{lem}\label{boxes}
        Let $U\subset \tilde{X}$ be a box of the form $(-a,a)\times (-r_0,r_0)$ for some $r_0>0$. Then there exists $r>0$ such that $\tilde{f}^{-n}(U)\subset (-a,a)\times(-r,r)$ for all $n>0$.
	\end{lem}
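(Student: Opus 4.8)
The plan is to work entirely inside the fixed strip $\tilde{X} = (-a,a)\times\bbR$, where $\tilf$ is a diffeomorphism, and to exploit the fact that the second coordinate of $\tilf$ expands by the constant factor $3$ up to a bounded shear term. Writing the lift in coordinates as $\tilf(x,y) = (\tilde{g}(x),\,3y+\tilde{\varphi}(x))$, where $\tilde{g}$ and $\tilde{\varphi}$ are the lifts of $g$ and $\varphi$, the key point is that backward iteration contracts the vertical coordinate by $1/3$ at each step, so the accumulated effect of the shear is summable. Thus the whole argument reduces to controlling the size of $\tilde{\varphi}$ along backward orbits.

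First I would record that $\tilf^{-n}(U)\subset\tilde{X}$ for every $n$. Since $\tilf$ fixes $\tilde{X}$ and is a diffeomorphism of $\bbR^2$, it restricts to a diffeomorphism of $\tilde{X}$, so all preimages of $U\subset\tilde{X}$ remain in $\tilde{X}$. In particular, for any $p_0\in\tilf^{-n}(U)$, the forward orbit $p_k=\tilf^k(p_0)=(x_k,y_k)$ with $0\le k\le n$ has every $x_k\in(-a,a)$, and $p_n\in U$ gives $|y_n|<r_0$.

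Next I would use that the shear is bounded on the strip: since $\varphi$ is monotone with $\varphi(-a)=-1$, $\varphi(0)=0$, $\varphi(a)=1$, the lift normalised by $\tilde{\varphi}(0)=0$ satisfies $|\tilde{\varphi}(x)|\le 1$ for all $x\in(-a,a)$, and hence $|\tilde{\varphi}(x_k)|\le 1$ along the orbit above. Inverting the recursion $y_{k+1}=3y_k+\tilde{\varphi}(x_k)$ then yields the telescoping identity
\[
y_0 = \frac{y_n}{3^n} - \sum_{k=0}^{n-1}\frac{\tilde{\varphi}(x_k)}{3^{k+1}},
\]
so that $|y_0| \le \tfrac{r_0}{3^n} + \sum_{k=0}^{\infty}3^{-(k+1)} = \tfrac{r_0}{3^n} + \tfrac12$. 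Choosing $r = r_0+1$ gives $|y_0|<r$ independently of $n$, whence $\tilf^{-n}(U)\subset\bbR\times(-r,r)$ for all $n>0$.

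The only real obstacle is the confinement step. Without restricting the orbit to $\tilde{X}$, the shear term could be large: $\tilde{\varphi}$ grows linearly in $x$ (it satisfies $\tilde{\varphi}(x+1)=\tilde{\varphi}(x)+2$), so if the $x_k$ were unbounded the series would diverge and the conclusion would fail. Invariance of $\tilde{X}$ under $\tilf^{-1}$ is exactly what keeps the $x$-coordinates, and hence the shear, uniformly bounded; once that is in place, the factor-$1/3$ contraction in backward time makes the geometric series converge and delivers the uniform bound.
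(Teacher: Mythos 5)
Your proof is correct, and it takes a more explicit route than the paper's. The paper argues softly: since $\tilde{f}$ stays a bounded distance from its linearisation $A$, the inverse $\tilde{f}^{-1}$ stays a bounded distance $C$ from $A^{-1}$, which contracts the vertical direction by $1/3$; hence the vertical extent $r_n$ of $\tilde{f}^{-n}(U)$ satisfies $r_{n+1}\le r_n/3+C$, and any $r$ with $r>r_0$ and $r>r/3+C$ closes the induction. You instead exploit the skew-product form of the lift, $\tilde{f}(x,y)=(\tilde{g}(x),3y+\tilde{\varphi}(x))$, solve the vertical recursion along a backward orbit in closed form, and sum the geometric series, obtaining the explicit constant $r=r_0+1$. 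Both arguments run on the same engine---vertical contraction by $1/3$ in backward time beating a uniformly bounded additive error---and differ in where that bound comes from: the paper's finite-distance comparison uses nothing about the formula for $f$ and is therefore more robust, while your computation is tied to the concrete example but yields sharp constants. A genuine merit of your write-up is that it isolates why the hypothesis $U\subset\tilde{X}$ is indispensable: as you note, $\tilde{\varphi}(x+1)=\tilde{\varphi}(x)+2$, so the conclusion is false for the full horizontal strip $\bbR\times(-r_0,r_0)$. The paper's proof elides this point: its first inclusion, $A^{-1}(U)\subset\bbR\times(-r_0/3,r_0/3)$, is not literally correct for $A=\begin{psmallmatrix}4&0\\2&3\end{psmallmatrix}$, whose inverse adds $-x/6$ to the vertical coordinate; that term is controlled precisely because backward orbits of $U$ remain in $\tilde{X}$, where $|x|<a$, and it can then be absorbed into the constant $C$. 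Your confinement step makes explicit what the paper leaves implicit.
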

	\begin{proof}
		If $U = (-a,a)\times (-r_0,r_0)$, we directly compute $A^{-1}(U)  =  (-a/4,a/4) \times (-r_0/3,r_0/3)$. Since $\tilde{f}$ is a finite distance from $A$ and the strip $\tilde{X} = (-a,a)\times \bbR$ is $\tilde{f}$-invariant, then $\tilde{f}^{-1}(U)\subset (-a,a)\times  (-r_0/3-C,r_0/3+C)$ for some $C>0$. Now we fix $r>r_0$ to satisfy $r>\frac{r}{3}+C$. The preceding calculation implies that $\tilde{f}^{-1}((-a,a)\times(-r,r)) \subset (-a,a)\times(-r,r)$. Since $r>r_0$ then $U\subset (-a,a)\times(-r,r)$ and the result follows by induction.
	\end{proof}
	
	Now we establish that the centre direction is not-uniquely integrable, proving \cref{thm:example}.
	\begin{proof}[Proof of \cref{thm:example}]
		Suppose that $E^c$ is uniquely integrable on all of $\bbR^2$, so that it integrates to a foliation which descends to $\bbT^2$. Consider a small centre curve $J^c \subset \bbR^2$ with one endpoint at the origin and the other endpoint at $p\in(0,a)\times\bbR$, so that $J^c\subset (0,a)\times\bbR$. Under $\tilf^{-n}$, the endpoint at the origin remains fixed. Note that the $x$-coordinate of $p$ lies in the interval $(0,a)$, and so by definition of $f$, the behaviour of $\tilf^{-n}(p)$ is determined by the dynamics of the circle map $g$ on $(0,a)\subset\bbS^1$. Namely, since $a\in\bbS^1$ is a repelling fixed point of $g$, the $x$-coordinate of $\tilf^{-n}(p)$ monotonically approaches $a$, and $\tilf^{-n}(p)$ approaches the vertical line $\{a\}\times\bbR$. By unique integrability of $X$ on $E^c$, we have $\tilf^{-n}(J^c)\subset \tilf^{-(n+1)}(J^c)$, with both of these curves being leaf segments of the leaf of the centre foliation through $(0,0)$. The curve $\{a\}\times\bbR$ is tangent to the centre, so by our assumption of unique integrability, it is necessarily a leaf of the centre foliation, implying that the endpoint of $\tilf^{-n}(J^c)$ cannot converge to a point on $\{a\}\times\bbR$.  Thus, $\tilf^{-n}(J^c)$ must grow unbounded in the vertical direction as $n\to\infty$. But by \cref{boxes}, $\tilf^{-n}(J^c)$ must be uniformly bounded in the vertical direction, giving a contradiction.
		
		Now we prove that the centre curves branch only on a set of countably many annuli. For this, recall that $g$ is homotopic to the circle map $x\mapsto 4x$. Then, a single preimage of the interval $[-a,a]\subset\bbS^1$ under the circle map $g$ consists of $[-a,a]$ and three other disjoint intervals, and we claim that the union of all backward iterates of $[-a,a]$ under $g$ is dense in $\bbS^1$. If this were not the case, there is some interval $I\subset\bbS^1$ whose set of orbits is disjoint from $[-a,a]$. Since $g$ is linear with $g'>1$ on $\bbS^1\setminus [-a,a]$, then $g^n(I)$ cannot be disjoint from $[-a,a]$ for large $n$. Thus, the preimages of $[-a,a]$ must be dense in $\bbS^1$. Then, the preimage of the invariant annulus $X$ under $f$ consists of $X$ and three other disjoint annuli, and $\bigcup_{n\geq 0}f^{-n}(X)$ is dense in $\bbT^2$.
		
		Since $E^c$ is uniquely integrable on $X$, then it is uniquely integrable on $\bigcup_{n\geq 0}f^{-n}(X)$. The centre direction is not uniquely integrable on each boundary circle of $X$, and is thus not uniquely integrable on each of their preimages. Thus $E^c$ behaves as desired.
	\end{proof}
	\begin{figure}[h]
	\includegraphics{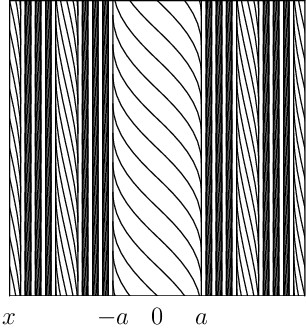}
        \caption{The centre curves of the endomorphism $f$ on $\bbT^2$.}
        \label{fig:tan}
	\end{figure}
	We remark that the endomorphism $f$ does not admit a global invariant unstable direction $E^u$. To see this, consider the fixed point $p=(0,0)\subset \Lambda$. Using our computed expression $D_{\Lambda}f = \begin{psmallmatrix}
		g'(0) & 0 \\
	\varphi'(0) & 3
	\end{psmallmatrix}$, we see that $Df^n(\Cone^u(p))$ becomes an arbitrarily small neighbourhood of the vertical as $n\to\infty$. So, if we suppose that $f$ has an invariant unstable direction $E^u$, then $E^u(p)$ must be vertical. Now consider $q\in K$ be a point such that $f(q) = p$. The unstable cone $\Cone^u$ at $q$ is a small unstable neighbourhood of the horizontal, and this cone is mapped by $Df$ to a small neighbourhood of the horizontal in $T_p\bbT^2$, as seen in \cref{bone}. But $\Cone^u$ is an unstable cone family for $f$, which means that $Df\Cone^u(q)$ must contain $E^u$, contradicting that $E^u(p)$ must be vertical.
	
	 With \cref{thm:example} proved, we now work toward obtaining a clear depiction of the centre curves as shown in \cref{fig:tan}. If $p$ is a point whose orbit is disjoint from $X$, then the orbit of $p$ lies in $K$. For $(x,y)\in K$, recall that
	\begin{equation}\label{eqn:diagonalderiv}
		Df_{(x,y)} = \left(
	\begin{matrix}
	g'(x) & 0 \\
	0 & 3
	\end{matrix}
	\right)
	\end{equation}
	and that $g'(x)$ is a constant greater than $4$, we see that the centre direction on the orbit of such $p$ is vertical. Elsewhere, we can use the following.
	\begin{lem}\label{lem:negativeslope}
		The centre direction $E^c$ on $X$ has negative slope.
	\end{lem}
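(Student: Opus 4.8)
The plan is to pin down the slope of $E^c$ on the open annulus by a backward cone argument, working on the lift $\tilde{X}=(-a,a)\times\bbR$, where $\tilf$ restricts to a diffeomorphism. On $\tilde{X}$ the derivative is $D\tilf=\begin{psmallmatrix} g'(x)&0\\ \varphi'(x)&3\end{psmallmatrix}$ with $g'(x)>0$ and $\varphi'(x)\ge 0$; since both $D\tilf$ and the horizontal dynamics $x\mapsto g(x)$ depend only on $x$, the centre slope is a function $\sigma(x)$ of $x$ alone. First I would record the induced action on slopes: a line of slope $s$ at $(x,y)$ is sent by $D\tilf$ to the line of slope $(\varphi'(x)+3s)/g'(x)$, so the local inverse sends a line of slope $s'$ back to the line of slope $(g'(x)s'-\varphi'(x))/3$.

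The key step is that the closed cone $C$ of non-positive slopes --- the arc of lines running from the horizontal through the negative slopes to the vertical --- is backward invariant. Indeed, if $s'\le 0$ then $(g'(x)s'-\varphi'(x))/3\le 0$ because $g'(x)>0$ and $\varphi'(x)\ge 0$, while the vertical is fixed by $D\tilf$; hence $D\tilf^{-1}(C)\subseteq C$ along $\tilde{X}$. Recalling from the proof of \cref{invtannulus} that on $\tilde{X}$ the direction $E^c$ coincides with the stable bundle of $\tilf|_{\tilde{X}}$, and noting that $E^c$ is the most strongly expanded direction of $D\tilf^{-1}$, the standard invariant cone criterion identifies $E^c$ as $\bigcap_{n\ge 0}D\tilf^{-n}(C)$, whence $E^c\subseteq C$. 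This already yields $\sigma(x)\le 0$, allowing a priori the horizontal and vertical as the two boundary directions of $C$.

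It then remains to promote this to a strict inequality on the open annulus, that is, to exclude those two boundary directions. Here I would use that every $x\in(-a,a)$ satisfies $g^n(x)\to 0$, since $0$ is the unique attracting fixed point of $g$ in $(-a,a)$ while $\pm a$ are repelling. On $\Lambda=\{0\}\times\bbS^1$ the direction $E^c$ is the eigenline of $D_{\Lambda}f$ for the eigenvalue $g'(0)$, namely the line of slope $\sigma_0=\varphi'(0)/(g'(0)-3)$, which is finite and strictly negative because $\varphi'(0)>0$ and $g'(0)<3$. If $\sigma(x)=0$ for some $x$, then the forward image has slope $\varphi'(x)/g'(x)\ge 0$, which combined with $E^c\subseteq C$ forces it to be horizontal; inductively $\sigma$ vanishes along the entire forward orbit, and passing to the limit $g^n(x)\to 0$ with continuity of $E^c$ gives $\sigma_0=0$, a contradiction. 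Similarly, the vertical is $D\tilf$-invariant, so $\sigma(x)=\infty$ would propagate along the forward orbit and force $\sigma_0=\infty$ in the limit, again contradicting the finiteness of $\sigma_0$. Hence $\sigma(x)\in(-\infty,0)$ for every $x\in(-a,a)$, which is the claim.

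I expect the main obstacle to be this strictness step rather than the cone computation: one must correctly identify $E^c$ with the backward-dominant direction of the diffeomorphism $\tilf|_{\tilde{X}}$ and then rule out the horizontal and vertical boundary directions using the convergence of forward orbits to $\Lambda$ together with continuity of the centre line field. The backward invariance of $C$ itself is a one-line sign check.
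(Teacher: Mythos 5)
Your opening sign computation and your final ``strictness'' step are both fine, but the central step --- the claim that the standard invariant cone criterion identifies $E^c$ with $\bigcap_{n\ge 0}D\tilf^{-n}(C)$ --- is false, and this is precisely where the containment $E^c\subseteq C$ (the heart of the lemma) was supposed to come from. The problem is that your cone $C$ contains the vertical. Since $D\tilf$ is lower triangular, the vertical direction is $D\tilf^{\pm 1}$-invariant at every point of $\tilde X$, so the vertical lies in $D\tilf^{-n}(C(f^n p))$ for every $n$ and hence in your intersection at every $p$. Worse, on $\Lambda$ the vertical is exactly the unstable direction $E^u$ (it is the eigenvector of $\begin{psmallmatrix} g'(0) & 0\\ \varphi'(0) & 3\end{psmallmatrix}$ for the eigenvalue $3$), and forward orbits in $X$ converge to $\Lambda$; so $C$ is not transverse to $E^u$ along the very orbits over which you pull back, and the invariance $D\tilf^{-1}(C)\subseteq C$ is not strict (the vertical boundary line of $C$ is fixed). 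One can see the failure explicitly on $\Lambda$: there the backward slope map is the affine contraction $s\mapsto\bigl(g'(0)s-\varphi'(0)\bigr)/3$ on finite slopes with the vertical as a second, repelling fixed direction, so $\bigcap_{n\ge0}D\tilf^{-n}(C)$ is the entire projective interval from the vertical to $s^*=-\varphi'(0)/(3-g'(0))$ --- a nondegenerate interval containing both $E^u$ and $E^c$, not the single line $E^c$. Since your strictness argument is conditioned on $E^c\subseteq C$, the proof as written does not go through. (Relatedly, the appeal to \cref{invtannulus} is off: that proof identifies $E^c$ with a stable direction only locally along $\{0\}\times\bbR$; there is no global stable bundle for $\tilf|_{\tilde X}$, as $E^c$ is vertical and expanded by the factor $3$ on the boundary circles.)

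The paper closes exactly this gap without any limiting argument: it starts from the defining property that $E^c$ lies in the \emph{complement} of the unstable cone family $\Cone^u$, and on $U_\Lambda$ that complement consists only of lines of negative slope (the complement of $\Cone_{\eps}$). This gives strict negativity on $U_\Lambda$ in one step. Then, since every $p\in X$ satisfies $f^n(p)\in U_\Lambda$ for some $n$ and $f|_X$ is invertible, one writes $E^c(p)=Df^{-n}\bigl(E^c(f^n(p))\bigr)$ and uses the fact that $Df^{-1}$ carries negative-slope lines to negative-slope lines on $X$ --- the same sign check you performed, which appears in the proof of \cref{lem:conen}. So your backward sign computation and your idea of propagating information along forward orbits into a neighbourhood of $\Lambda$ are the right ingredients; what is missing is a legitimate way of placing $E^c$ inside a negative-slope cone to begin with, and the complement-of-$\Cone^u$ property does this directly. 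If you insist on a limiting argument instead, you must pull back a single direction uniformly transverse to $E^u$ near $\Lambda$ (for example the horizontal, since $E^u$ is vertical there) and invoke domination to conclude $D\tilf^{-n}(H(f^n p))\to E^c(p)$; pulling back the full cone $C$ cannot work, for the reason above.
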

	\begin{proof}
		Recall that $E^c$ lies in the complement of $\Cone^u$. For $q\in U_{\Lambda}$, the unstable cone is $\Cone_{\eps}(q)$, so all vectors not in $\Cone^u$, including those in the centre direction, have negative slope. Hence the proposition holds at $q$. If $p \in X$, is such that $f(p)=q$ for some $q\in U_{\Lambda}$, then $E^c(p) = Df^{-1}(E^c(q))$. But $E^c(q)$ has negative slope, and it was shown in the proof of \cref{glue} that $D_pf^{-1}$ takes vectors of negative slope to vectors of negative slope for $p\in X$, so the claim also holds at $p$. Since $X\subseteq \bigcup_{n\geq 0}f^{-n}(U_{\Lambda})$, then by continuing in this fashion, the claim holds at all points in $X$.
	\end{proof}
	Since $Df$ is constant given by the equation \cref{eqn:diagonalderiv} on the complement of $X$, then a connected component of $f^{-1}(X)$ which is not itself $X$ is a rescaled copy of $X$. This copy is contracted more strongly horizontally than vertically. This procedure continues while iterating $X$ backwards, and so one can show that the centre curves are as in \cref{fig:tan}.
	
	We conclude by noting that the authors believe that this example $f$ is dynamically incoherent. However, showing this would require more effort than required for our purposes, since it will be easier to prove incoherence of the examples we construct in the next section.
\section{General construction}\label{sec:gen}
In this section, we prove \cref{thm:allclasses} by generalising the construction used to establish \cref{thm:example}. Dynamically incoherent examples for when linearisation has an eigenvalue of magnitude $1$ are constructed in the earlier works \cite{heshiwang} and \cite{enco}, so we only need to consider the case of when the linearisation is expanding. Our approach is eased by first observing we only need to construct an example homotopic to linear maps of a certain form:
\begin{lem}\label{lem:triangle}
	Let $A:\bbT^2\to\bbT^2$ be a linear expanding map with integer eigenvalues $\lambda$ and $\mu$. Then $A$ is conjugate as a map on $\bbT^2$ to a linear map $B: \bbT^2\to\bbT^2$ given by
	\[
	B = 
	\left(
	\begin{matrix}
	\mu & 0 \\
	t & \lambda
	\end{matrix}
	\right)
	\]
	for some $t\in\bbZ$.
\end{lem}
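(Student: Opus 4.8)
The plan is to prove this by integer linear algebra and then descend to the torus: I will produce an explicit matrix $Q\in\mathrm{GL}_2(\bbZ)$ with $B = Q^{-1}AQ$, and observe that any element of $\mathrm{GL}_2(\bbZ)$ induces a self-homeomorphism of $\bbT^2=\bbR^2/\bbZ^2$ intertwining the two toral maps. The crucial point is that the integer eigenvalue hypothesis forces a \emph{rational} eigendirection, which yields a change of basis that is not merely real-linear but \emph{unimodular}, and hence descends to the torus.

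First I would produce a primitive integer eigenvector for $\lambda$. Since $\lambda\in\bbZ$ is an eigenvalue of the integer matrix $A$, the matrix $A-\lambda\,\Id$ has integer entries and vanishing determinant, so its kernel is a nontrivial rational subspace of $\bbR^2$. Clearing denominators and dividing out the greatest common divisor yields $v=(v_1,v_2)\in\bbZ^2$ with $Av=\lambda v$ and $\gcd(v_1,v_2)=1$. Because $v$ is primitive, B\'ezout's identity produces $w\in\bbZ^2$ with $\det[\,v\mid w\,]=1$, so the columns $v,w$ form a $\bbZ$-basis of $\bbZ^2$ and $C:=[\,v\mid w\,]\in\mathrm{GL}_2(\bbZ)$.

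Next I would express $A$ in this basis. Writing $Aw = cv+dw$ with $c,d\in\bbZ$ (integrality holds because both $A$ and the basis are integral), one gets $C^{-1}AC = \begin{psmallmatrix} \lambda & c \\ 0 & d \end{psmallmatrix}$, and comparing the trace with that of $A$ forces $d=\mu$. A final conjugation by the swap $S=\begin{psmallmatrix} 0 & 1 \\ 1 & 0 \end{psmallmatrix}\in\mathrm{GL}_2(\bbZ)$ reverses the diagonal and moves the off-diagonal entry below the diagonal, giving $S\,(C^{-1}AC)\,S = \begin{psmallmatrix} \mu & 0 \\ c & \lambda \end{psmallmatrix}$, which is exactly $B$ with $t=c$. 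Setting $Q=CS\in\mathrm{GL}_2(\bbZ)$ then yields $B=Q^{-1}AQ$.

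Finally, since $Q$ preserves the lattice $\bbZ^2$ it descends to a self-homeomorphism of $\bbT^2$, and $B=Q^{-1}AQ$ descends to the assertion that $A$ and $B$ are conjugate as maps of $\bbT^2$. I expect the only genuinely delicate point to be the insistence on \emph{unimodularity} throughout: over $\bbR$ any matrix with a real eigenvalue is triangularisable, but here one must check that the eigenvector can be taken primitive and extended to a $\bbZ$-basis, since it is precisely the membership $Q\in\mathrm{GL}_2(\bbZ)$, rather than merely $Q\in\mathrm{GL}_2(\bbR)$, that promotes the linear conjugacy to a conjugacy of torus maps. The repeated-eigenvalue case $\lambda=\mu$ requires no special treatment, as the eigenspace of $\lambda$ is still rational and the same argument applies.
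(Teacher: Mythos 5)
Your proof is correct and takes essentially the same route as the paper's: both extract a primitive integer eigenvector $v=(a,b)$ for $\lambda$ with $\gcd(a,b)=1$ and use B\'ezout to build a unimodular conjugation carrying $v$ to a coordinate vector---the paper finds $P\in SL(2,\bbZ)$ with $Pv=(0,1)$ and sets $B=PAP^{-1}$, while you complete $v$ to a $\bbZ$-basis and then conjugate by the coordinate swap, which is the same construction up to a determinant sign. Your write-up merely fills in details the paper leaves implicit (rationality of the eigenspace, integrality of $C^{-1}AC$, and the trace argument pinning the other diagonal entry as $\mu$).
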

\begin{proof}
	As the eigenvalue $\lambda$ of $A$ is an integer, it has an associated eigenvector $v = (a,b)\in\bbZ^2$ for $a$ and $b$ with $\gcd(a,b)=1$. The lemma will be satisfied if we can find $P \in SL(2,\bbZ)$ such that $Pv = (0,1)$, as we may take $B = PAP^{-1}$. This amounts to solving two coupled equations over $\bbZ$, which since $\gcd(a,b)=1$, always has a solution.
\end{proof}
Now to prove \cref{thm:allclasses}, it suffices to construct examples which are homotopic to the linear map $B$ of the form in the preceding lemma.

We remark that the deformation approach to obtain $f$ from the map $f_0$ in \cref{sec:eg} changed the homotopy class of the map. For our general example, we will adapt this approach to construct an example in a desired homotopy class. The idea is to define the initial map $f_0$ with two invariant annuli, and then apply two shears in opposing directions along each annulus. To encourage visualising this idea, we refer the reader to \cref{fig:tan2}, which shows how the centre curves look for our example in the case $t=0$. We suggest comparing this to the centre curves of the example constructed in the previous section, as shown in \cref{fig:tan}. The depiction in \cref{fig:tan2} will be justified later. Now, we proceed to construct the examples.

\begin{lem}\label{lem:g-existence}
	Given integers $\mu, \lambda>1$, there exists $a\in(0,1/4)$ and a smooth, monotone circle map $g:\bbS^1\to \bbS^1$ such that $g$:
	\begin{itemize}
		\item is homotopic to the map $x\mapsto \mu x$,
		\item has fixed points $x=-2a,-a,0,\,a,\,2a$ and no other fixed points in $[-2a,2a]$,
		\item satisfies $g'(a)=g'(-a)<1$ and $g'(0)>\lambda$,
		\item is linear on the complement of $(-2a, 2a)$, on which we have $g' > \lambda$.
	\end{itemize}
	\end{lem}
	\begin{proof}
	Since we want $g$ to fix $[-2a,2a]$, then to ensure $g$ is homotopic to $x\mapsto \mu x$ and linear on $\bbS^1\setminus [-2a,2a]$, we require that the linear portion of $g$, considered as a $\bbZ$-periodic map on $\bbR$, maps $[2a,1-2a]$ to $[2a,\mu-2a]$. The slope of $g$ on this linear region is then $\frac{\mu-4a}{1-4a}$. Thus, by choosing $a$ sufficiently close to $1/4$, we can ensure $g'>\lambda$. After fixing such an $a$, we can then define $g$ over $[-2a,2a]$ to satisfy the second and third properties in a similar fashion to how $g$ was chosen in the earlier construction.
	\end{proof}
\begin{figure}[h]
\includegraphics{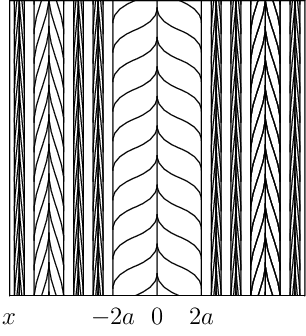}
	\caption{The centre curves of the general example when $t=0$ and $\mu=3$.}
	\label{fig:tan2}
\end{figure}
\begin{prop}
	If $A:\bbT^2\to\bbT^2$ is an expanding linear map with integer eigenvalues, there exists a partially hyperbolic surface endomorphism $f:\bbT^2\to\bbT^2$ which admits a periodic centre annulus and is homotopic to $A$.
\end{prop}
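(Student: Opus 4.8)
The plan is to carry out the construction of \cref{sec:eg} with two invariant annuli instead of one. By \cref{lem:triangle} it suffices to produce an example homotopic to $B=\begin{psmallmatrix}\mu&0\\t&\lambda\end{psmallmatrix}$, and after relabelling the eigenvalues we may assume $\mu\geq\lambda\geq 2$ (negative integer eigenvalues require only cosmetic changes to the cone orientations below). First I would fix a monotone circle map $g:\bbS^1\to\bbS^1$ homotopic to $x\mapsto\mu x$ that fixes two disjoint closed intervals $I_1,I_2$, with cores $c_1,c_2$ satisfying $g'(c_i)<1$, is linear near the endpoints of the $I_i$, and satisfies $g'(x)>\lambda$ off $I_1\cup I_2$. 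This is compatible with the degree constraint $\int_{\bbS^1}g'=\mu$: allowing $g'<1$ on the two small intervals forces $g'$ to be large, and hence larger than $\lambda$, on the complement. Setting $f_0(x,y)=(g(x),\lambda y)$ then yields a map fixing the two annuli $X_i=\sint(I_i)\times\bbS^1$.

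Next I would introduce a monotone shear $\varphi_i$ supported in $I_i$ with $\varphi_i'(c_i)$ large, and set $f(x,y)=(g(x),\lambda y+\varphi_1(x)+\varphi_2(x))$. Regarding $\varphi_i$ as a self-map of $\bbS^1$, its integer degree $d_i$ is exactly the contribution of the shear to the lower-left entry of the linearisation, so $f$ is homotopic to $B$ precisely when $d_1+d_2=t$. Since a monotone shear with $\varphi_i'(c_i)\neq 0$ necessarily has nonzero degree, a single shear cannot realise $t=0$; splitting the winding between two annuli lets me realise every $t\in\bbZ$ while keeping each $\varphi_i$ monotone, and I would always arrange $d_1>0>d_2$, which is the pair of opposing shears depicted in \cref{fig:tan2}. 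The reason each annulus must genuinely carry a shear is that, just as at $\Lambda$ in \cref{sec:eg}, it is $\varphi_i'(c_i)$ that makes the core circle normally hyperbolic and forces the unstable cone to expand there.

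Partial hyperbolicity is then established along the lines of \cref{coneeps}--\cref{glue}. Near the core of $X_i$ I define a constant cone about the first quadrant when $d_i>0$, and its reflection in the horizontal when $d_i<0$; the estimate of \cref{coneeps}, which only uses $\varphi_i'(c_i)^2+g'(c_i)^2>1$, shows each such cone is mapped strictly inside itself and expanded. On the expanding set $K=\bbT^2\setminus(X_1\cup X_2)$, where $Df=\operatorname{diag}(g'(x),\lambda)$ with $g'(x)>\lambda$, I use the thin horizontal cone $\Bone_\delta$ of \cref{bone}. The analogue of \cref{lem:conen} shows the pulled-back cones near each $X_i$ remain closed neighbourhoods of the horizontal, regardless of the sign of $d_i$, so $\Bone_\delta$ lies inside all of them and the quadratic-form interpolation of \cref{glue} assembles a single $Df$-invariant, eventually expanded cone family $\Cone^u$.

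Finally, each boundary circle of $X_i$ lies in $K$, where the vertical direction is the eigendirection for $\lambda$ and lies outside $\Cone^u$; hence these circles are tangent to $E^c$ and each $X_i$ is an invariant, in particular periodic, centre annulus, exactly as in \cref{invtannulus}. I expect the main obstacle to be the cone construction in the presence of opposing shears: the unstable cones at the two cores are reflections of one another, so I must verify that one and the same horizontal cone on $K$ is compatible with both orientations under $Df$ and under its inverse branches, and that the interpolation preserves invariance near each annulus in both the $d_i>0$ and $d_i<0$ cases. Once this orientation bookkeeping is done, the remaining estimates are routine adaptations of \cref{sec:eg}.
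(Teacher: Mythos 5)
Your treatment of the case of positive eigenvalues is essentially the paper's own construction: reduce to $B=\begin{psmallmatrix}\mu&0\\t&\lambda\end{psmallmatrix}$ via \cref{lem:triangle}, build a circle map $g$ fixing two intervals with attracting cores and $g'>\lambda$ elsewhere, apply opposing monotone shears whose degrees sum to $t$, and glue the core cones of \cref{coneeps} to the horizontal cone of \cref{bone} exactly as in \cref{lem:conen}--\cref{glue} and \cref{invtannulus}. Your observation that a nontrivial monotone shear must have nonzero degree, so that $t=0$ forces the winding to be split between two annuli with $d_1>0>d_2$, is a correct and clean justification of why two annuli are used. The reduction to $\mu\geq\lambda$ by relabelling which eigendirection becomes vertical is also legitimate (both eigendirections are rational), and it spares you the paper's side remark about choosing $a$ near $1/4$ when $\mu<\lambda$.

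The genuine gap is the sentence dismissing negative integer eigenvalues as ``only cosmetic changes to the cone orientations.'' When $\mu<0$, any local diffeomorphism $g$ homotopic to $x\mapsto\mu x$ satisfies $g'<0$ everywhere, so a $g$-fixed interval has its endpoints swapped and contains exactly one fixed point; the core derivative $\begin{psmallmatrix}g'(c)&0\\ \varphi'(c)&\lambda\end{psmallmatrix}$ then has $g'(c)<0$, and it does \emph{not} map the first-quadrant cone, nor its reflection in the horizontal, into itself: the image of $(1,0)$ lands in the second quadrant. More seriously, the sign bookkeeping that drives \cref{lem:conen} (``$\varphi',g'\geq 0$, so quadrants are preserved under $Df^{-1}$'') fails, because forward images of the horizontal now alternate sign of slope at every iterate rather than staying on one side; no fixed reflection of the cones repairs this, since the orientation flips with each application of $Df$. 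An analogous problem occurs in the fibre direction when $\lambda<0$. The paper does not fix this by adjusting cones at all: it blows up a point of period exactly two of $x\mapsto\mu x$ to produce an interval $I$ with $g(I)\cap I=\emptyset$ and $g^2(I)=I$, arranges that $g^2|_I$ has precisely the positive-derivative structure used in the orientable case, and runs the construction for $f^2$. The resulting annulus $I\times\bbS^1$ is then only $f^2$-invariant --- which is exactly why the proposition asserts a \emph{periodic} centre annulus rather than an invariant one. Your proposal, which only ever produces $f$-invariant annuli, cannot cover these homotopy classes as written, and some version of this period-doubling idea (or a genuinely new cone argument handling alternating orientations) is needed to complete the proof.
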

\begin{proof}
	Begin by letting $\lambda,\,\mu >1$ and $t\geq 0$ be integers, and let $B$ be as in \cref{lem:triangle}. We will later explain how an example can be constructed for non-positive $\lambda$ and $\mu$, while when $t<0$, the examples are similar, so the details are left to the reader.
	
	For our fixed $\lambda$ and $\mu$, let $a\in\bbS^1$ and $g:\bbS^1\to\bbS^1$ be as in \cref{lem:g-existence}, and define $f_0:\bbT^2\to\bbT^2$ by $f_0(x,y) = (g(x),\lambda y)$. Then $f_0$ is homotopic to the linear map given by $\begin{psmallmatrix}
			\mu & 0 \\
			0 & \lambda
			\end{psmallmatrix}$ and has two invariant annuli $(-2a,0)\times \bbS^1$ and $(0,2a)\times\bbS^1$ which share the boundary circle $\{0\}\times\bbS^1$. 
	
	Let $\varphi:\bbS^1\to\bbS^1$ be a smooth, monotone map such that $\varphi(0) = 0$, $\varphi(a)=(t+1)/2$ and $\varphi(2a)=t+1$, the support of $\varphi'$ is contained in $(0,2a)$ and $\varphi'(a)>1$. Define another smooth, monotone map $\psi:\bbS^1\to\bbS^1$ to be such that $\psi(-2a) = 0$, $\psi(-a) = -1/2$ and $\psi(0) = -1$, the support of $\psi'$ is contained in $(-2a,0)$ and $\psi'(-a)<-1$. Then $\varphi$ is a shear upwards by a factor of $t+1$ in one invariant annulus, while $\psi$ is a shear downwards by a factor of $1$ in the other. The explicit deformation to give the desired example is given by $f(x,y) = (g(x), \lambda y + \varphi(x) + \psi(x))$. Note that while the initial map $f_0$ was not homotopic to $B$ unless $t=0$, the shearing by both $\varphi$ and $\psi$ together result in $f$ being homotopic to $B$.
	
	To see that $f$ is partially hyperbolic, we adapt the main ideas of \cref{sec:eg}. Note that the orbits of points in the annuli $X_1 =(-2a,0)\times\bbS^1$ and $X_2 = (0,2a)\times\bbS^1$ are disjoint, so the approach is to define cone families much like the one annulus for the concrete example on each of them. The invariant circles $\{-a\}\times\bbS^1$ and $\{a\}\times \bbS^1$ are hyperbolic invariant manifolds, so there is a natural unstable cone-family defined on each of these circles akin to $\Cone_{\eps}$ in \cref{coneeps}. Meanwhile, on the complement of $X_1 \cup X_2$, the map $f$ is linear, and a cone family which is a small uniform neighbourhood of the horizontal will be an unstable cone family, similar to $\Bone$ in \cref{bone}. Arguing as in \cref{compatible} and \cref{glue}, we can stitch these cones together to construct $\Cone^u$, a cone family which satisfies $Df \Cone^u(p)\subset \sint\Cone^u(f(p)))$. Since $f$ expands $\Cone^u$ on the linear region close to the invariant hyperbolic circles, then by using arguments of \cref{ph}, we can show that $\Cone^u$ is in fact an unstable cone family and that $f$ is partially hyperbolic. Now by applying the arguments of \cref{invtannulus} to each invariant annulus $X_1$ and $X_2$, we see that they are invariant centre annuli of $f$.
	\medskip{}
	
    Now we drop the assumption that $\mu,\lambda >0$. So let the matrix $B$ be of the form $\begin{psmallmatrix}
			\mu & 0 \\
			t & \lambda
			\end{psmallmatrix}$
    where one or both of $\mu$ and $\lambda$ may be negative. Since $|\mu| > 1$, the linear map $g_0 : \bbS^1 \to \bbS^1,$ defined by $x \mapsto \mu x$
    has at least one point
    with period exactly two. Using a deformation,
    one can define a map $g : \bbS^1 \mapsto \bbS^1$
    and interval $I \subset \bbS^1$ such that
    such that
    \begin{itemize}
        \item 
        $g(I)$ is disjoint from $I$,
        \item
        $g^2(I)$ is equal to $I$, and
        \item
        $g$ is linear on the complement of $I$ with derivative
        $|g'| > |\lambda|$.
    \end{itemize}
    Up to conjugation with a rigid rotation,
    we may assume $I$ is centred at zero.
    That is, there is $a > 0$ such that $I = (2a, -2a)$.
    Moreover, by replacing $g|_I$
    (but leaving $g$ on $g(I)$ linear and unchanged),
    we may assume that $g^2$ has fixed points at
    $x = -2a, -a, 0, a, 2a,$
    and that $(g^2)'(-a) = (g^2)'(a) < 1$.
    In other words,
    $g^2$ here has the properties
    that $g$ had in the case when $\mu$ and $\lambda$ were assumed positive in the earlier section of this proof.  Let the shearing functions $\varphi$ and $\psi$
    be defined exactly as before and define
    $f(x,y) = (g(x), \lambda y + \varphi(x) + \psi(x)).$
    By again adapting the previous techniques,
    one can show that the resulting endomorphism
    is partially hyperbolic. It is easy to see that the annulus $I\times \bbS^1$ is an invariant centre annulus for $f^2$, so that $I\times \bbS^1$ is a periodic centre annulus for $f$.
\end{proof}

To establish \cref{thm:allclasses}, we now show that the examples constructed in the preceding proof are dynamically incoherent. While it is unclear whether or not the original example of \cref{sec:eg} was incoherent, the presence of two adjacent periodic annuli as depicted in \cref{fig:tan2} makes observing coherence straightforward.
	\begin{proof}[Proof of \cref{thm:allclasses}]
        Let $f$ be an example established in the proof of the preceding proposition with the assumption that $\lambda$, $\mu$, $t\geq0$. The other cases are again similar and left to the reader. When we restrict $f$ to $X_1$, the map admits an invariant splitting $E^c\oplus E^u$ and the circle $\{a\}\times\bbS^1$ is a hyperbolic attractor tangent to the unstable direction. The centre direction is thus uniquely integrable on a neighbourhood of this circle, which in turn implies it is uniquely integrable on all of $X_1$. Similarly, $E^c$ is uniquely integrable on $X_2$.

        Using the ideas of \cref{lem:negativeslope}, one can show that $E^c$ has positive slope on $X_1 = (-2a,0)\times\bbS^1$, while it has negative slope on $X_2 = (0,2a)\times\bbS^1$. Due to the shearing being in the opposite direction on each annulus, the centre curves approach the centre circle $\{0\}\times \bbS^1$ with slopes of opposite sign, as is shown in \cref{fig:tan2}. On a neighbourhood of the circle $\{0\}\times\bbS^1$ there cannot exist a foliation chart. Hence $f$ is dynamically incoherent.
	\end{proof}
	Note that the examples constructed in the preceding theorem do not admit invariant unstable directions. This can be seen by an argument similar to how we established this property for the earlier explicit example. Namely, if we had a unique unstable direction $E^u$, then for a point $p \in \{a\}\times\bbS^1$, one can show that $E^u(p)$ must be vertical. However, if $q$ is a point in the linear region of $f$ such that $f(q)=p$, the cone $Df (\Cone^u(q))\subset T_p\bbT^2$ will be a small neighbourhood of the horizontal which does not contain $E^u(p)$, which is a contradiction.

	We conclude by justifying the rest of the depiction of the centre curves as is shown in \cref{fig:tan2}. Once more, outside the orbits of the two invariant annuli $X_1$ and $X_2$, the map $f$ is a linear map preserving the horizontal and vertical directions, expanding stronger in the horizontal. An annulus in the preimage of either $X_1$ and $X_2$ is a linear rescaling of the curves on the invariant annulus that is contracted a greater amount in the horizontal than the vertical, and so when $t=0$ in the linearisation $B$, the centre curves are as shown. Note that the slopes on each annulus $X_1$ and $X_2$ will not be symmetric as in the figure when $t\neq 0$, though they will always be of opposite sign.
	
	\printbibliography
\end{document}